\newtheorem{proposition}{Proposition}
\newtheorem{corollary}{Corollary}
\newtheorem*{theo}{Theorem}
\theoremstyle{definition}
\newtheorem{remark}{Remark}
\newcommand{\red}{\color{red}}
\newcommand{\blue}{\color{blue}}
\newcommand{\green}{\color{green}}
\newcommand{\hide}[1]{\ifbool{hidedetails}{}{{\blue #1}}}
\newcommand{\extra}[1]{\ifbool{hideextra}{}{{\green #1}}}
\newcommand{\fla}[1]{\ifbool{hideextra}{#1}{{\red #1}}}
\newcommand{\lp}{\left(}
\newcommand{\rp}{\right)}
\newcommand{\lf}{\left\{}
\newcommand{\rf}{\right\}}
\newcommand{\hooklongrightarrow}{\lhook\joinrel\longrightarrow}			
\newcommand{\ainty}{{\mathcal A_\infty}}				
\newcommand{\sset}[1][]{{\rm{SSet}_{#1}}}		
\newcommand{\coss}[1][]{{\rm{c\sset[#1]}}}		
\newcommand{\stasi}[1][]{{\boldsymbol\Delta_{#1}}}		
\newcommand{\stasic}[1][]{{\stasi[#1]\lp\mathbb C\rp}}		
\newcommand{\dgcon}[1][]{{\mathfrak C^{\leq 0}\lp{#1}\rp}}		
\newcommand{\dgcop}[1][]{{\ifthenelse{\isempty{#1}}{\mathfrak C^{\geq 0}}{\mathfrak C^{\geq 0}\lp{#1}\rp}}} 
\newcommand{\ddcom}[3][]{{\ifthenelse{\isempty{#1}}{\mathfrak C^{#2,#3}}{\mathfrak C^{#2,#3}\lp{#1}\rp}}} 
\newcommand{\dgcou}[2][]{{\ifthenelse{\isempty{#1}}{\mathfrak C^{#2}}{\mathfrak C^{#2}\lp{#1}\rp}}} 
\newcommand{\mixing}{{\mu}}		
\newcommand{\grmix}[3][]{{\ifthenelse{\isempty{#1}}{\mathfrak M^{#2,#3}}{\mathfrak M^{#2,#3}\lp{#1}\rp}}}	
\newcommand{\grix}[1][]{{XXX}}
\newcommand{\codeg}[1][]{{s}}		
\newcommand{\snorm}{{N}}	
\newcommand{\cnorm}{{\mathcal N}}	
\newcommand{\cmorm}{{\widetilde{\mathcal N}}}	
\newcommand{\sig}{{\iota}} 
\newcommand{\GL}[1]{{{\rm GL}_{#1}\lp\mathbb C \rp}}																		
\newcommand{\smgl}[1]{{\underline{\rm GL}_{#1}\lp \mathbb C \rp}}		
\newcommand{\gl}[1]{{\mathfrak{gl}\lp #1\rp}}				
\newcommand{\G}[1][]{{{\mathcal G}_{#1}}}							
\newcommand{\PGL}[1]{{{\rm PGL}_{#1}\lp \mathbb C\rp}}						
\newcommand{\smpgl}[1]{{\underline{\rm PGL}_{#1}\lp \mathbb C \rp}}		
\newcommand{\SL}[1]{{{\rm SL}_{#1}\lp \mathbb C\rp}}			
\newcommand{\algr}{{\rm G}}																				    
\newcommand{\subg}{{\rm H}}		
\newcommand{\quospa}{{\mathcal M}}		
\newcommand{\quobit}{{\mathcal U}}		
\newcommand{\allie}{{\mathfrak g}}			
\newcommand{\dete}[1][]{{\ifthenelse{\isempty{#1}}{\Delta}{\Delta(#1)}}}						  
\newcommand{\spec}[1][]{{\ifthenelse{\isempty{#1}}{\rm Spec}{{\rm Spec}({#1})}}}				 
\newcommand{\proj}[1][]{{\ifthenelse{\isempty{#1}}{\rm Proj}{{\rm Proj}(#1)}}}					
\newcommand{\gring}[1][\bullet]{{A_{#1}}}																	      
\newcommand{\quot}{{\rm Quot}}																					    
\newcommand{\quotof}[2]{{\quot\lp #1,#2 \rp}}																		     
\newcommand{\derqu}{{\rm DQuot}}																				  
\newcommand{\derqof}[4][\bullet]{{\derqu^{#1}_{#4}\lp #2,#3 \rp}}		 
\newcommand{\derquz}[1][]{{{\rm Gr}_{#1}}}												
\newcommand{\grass}[2]{{{\rm Grass}\lp #1,#2 \rp}}				  
\newcommand{\twish}[2][]{{\ifthenelse{\isempty{#1}}{\mathcal O_{#2}}{\mathcal O_{#2}(#1)}}}	
\newcommand{\twi}[2]{{ #1 \lp #2 \rp}}												                                       
\newcommand{\posi}{{\mathbb N}}											                                           
\newcommand{\coh}{{\mathcal F}}											                                            
\newcommand{\sumto}[2]{{#1^{\oplus^{#2}}}}							                                         
\newcommand{\dimglo}{{n}}						                                                                      
\newcommand{\dimgra}[1][]{{N_{#1}}}							
\newcommand{\dimpro}[1]{{l_{#1}}}							
\newcommand{\CY}{{\mathcal X}}						                                                              
\newcommand{\hilpo}[1][]{{{\rm P}_{#1}}}						                                                 
\newcommand{\casmu}{{m}}														                                    
\newcommand{\addshi}{{k}}																							  
\newcommand{\lebo}{{p}}																							 
\newcommand{\ribo}{{q}}																							  
\newcommand{\tribun}[1]{{\mathcal W_{#1}}}																	            
\newcommand{\tabu}[1][]{{\mathcal V_{#1}}}																		 
\newcommand{\Homo}[2]{{{\mathcal Hom_0}\lp #1,#2\rp}}		
\newcommand{\biglie}[1][\bullet]{{\mathfrak g^{#1}}}			
\newcommand{\bigring}[2][\bullet]{{\mathcal C^{#1}_{#2}}}			
\newcommand{\matri}{{M}}			
\newcommand{\staquz}[1][]{{\mathbf{Gr}_{#1}}}		
\newcommand{\semiquz}[1][]{{{Gr}_{#1}}}		
\newcommand{\staqof}[4][\bullet]{{\mathbf{DQuot}^{#1}_{#4}\lp #2,#3 \rp}}		
\newcommand{\derquot}{{\mathbf{DQuot}^{\bullet}}}	
\newcommand{\demquot}[1][\bullet]{{\underline{\mathbf{DQuot}}^{#1}}}	
\newcommand{\pt}{{\rm p}}		
\newcommand{\secti}{{\sigma}}			
\newcommand{\quo}[2]{{{#1}\sslash{#2}}}			
\newcommand{\dera}[2][\bullet]{{{#2}^{#1}}}						
\newcommand{\cinfty}{{C^\infty}}							
\newcommand{\dedi}{{\mathfrak d}}							
\newcommand{\homdi}{{\delta}}			
\newcommand{\nform}[2][]{{\ifthenelse{\isempty{#1}}{\omega_{#2}}{\omega_{#2,#1}}}}						
\newcommand{\symfo}{{\underline{\omega}}}					
\newcommand{\symstra}[1][]{{\boldsymbol\omega^{#1}}}					
\newcommand{\formstra}[2][]{{\ifthenelse{\isempty{#1}}{\boldsymbol\omega_{#2}}{\boldsymbol\omega_{#2,#1}}}}
\newcommand{\refo}[1]{{{#1}_{\rm re}}}						
\newcommand{\imfo}[1]{{{#1}_{\rm im}}}						
\newcommand{\para}{{\tau}}									
\newcommand{\sara}{{\sigma}}			
\newcommand{\negacy}[2][2]{{{{\rm NC}^{#2}_{#1}}}}				
\newcommand{\sish}[3][]{{{\widetilde{\boldsymbol{\mathfrak F}}}^{#3}_{#1}\lp #2 \rp}}	
\newcommand{\lish}[3][]{{{\overline{\boldsymbol{\mathfrak F}}}^{#3}_{#1}\lp #2 \rp}}	
\newcommand{\nish}[3][]{{{{\boldsymbol{\mathfrak N}}}^{#3}_{#1}\lp #2 \rp}}	
\newcommand{\dish}[3][]{{{\boldsymbol{\mathfrak F}}^{#3}_{#1}\lp #2 \rp}}				
\newcommand{\derham}[2][]{{\Omega^{#1}_{#2}}}			
\newcommand{\staclo}[2][\bullet]{{\boldsymbol\Omega^{#1}_{2,#2}}}		
\newcommand{\striclo}[2][\bullet]{{\overline{\boldsymbol\Omega}^{#1}_{2,#2}}}		
\newcommand{\suspense}[2][1]{{#2[#1]}}						
\newcommand{\shift}[2][1]{{#2\langle #1\rangle}}						
\newcommand{\Sym}[3][]{{{\rm Sym}_{#1}^{#2}\lp #3 \rp}}				
\newcommand{\hoclo}[2][2]{{\mathcal A_{#1}^{#2}}}		
\newcommand{\astack}{{\mathfrak S}}		
\newcommand{\quack}[2]{{\llbracket #1/#2\rrbracket}}		
\newcommand{\duack}[2]{{\Big\langle #1/#2\Big\rangle}}		
\newcommand{\tanga}[2][\bullet]{{\mathbb T_{#2}^{#1}}}					
\newcommand{\lagra}[1][]{{\boldsymbol{\lambda}}}					
\newcommand{\lagrast}[2][0]{{\boldsymbol\lambda}_{#2,#1}}					
\newcommand{\lmin}[1][\bullet]{{\mathbb L^{#1}}}							
\newcommand{\llin}[1][\bullet]{{\mathbb K^{#1}}}			
\newcommand{\dishe}[2][]{{{\boldsymbol{\mathfrak L}}_{#1}\lp #2 \rp}}		
\newcommand{\fol}[2][]{{{\boldsymbol{\mathfrak F}}_{#1}\lp #2 \rp}}		
\newcommand{\anchor}{{\alpha}}							
\newcommand{\coanchor}{{\boldsymbol\alpha}}		
\newcommand{\gramial}[1][\bullet]{{\Lambda^{#1}}}		
\newcommand{\alldis}[1]{{{\mathfrak L}\lp #1\rp}}			
\newcommand{\alldig}[1]{{{\mathfrak L}^{\rm gr}\lp #1\rp}}		
\newcommand{\stadis}{{\boldsymbol{\mathfrak L}}}		
\newcommand{\derma}[1][\bullet]{{\underline{U}^{#1}}}		
\newcommand{\derna}[1][\bullet]{{\underline{V}^{#1}}}		
\newcommand{\deroa}[1][\bullet]{{\underline{W}^{#1}}}		
\newcommand{\smoup}{{\smgl{\dimglo}}}		
\newcommand{\spoup}{{\smpgl{\dimglo}}}	
\newcommand{\dersch}[1][]{{{\rm S}^{#1}}}			
\newcommand{\dgsch}[1][\bullet]{{{\mathbf M}^{#1}}}			
\newcommand{\noneg}{{\mathbb Z_{\geq 0}}}		
\newcommand{\map}[3][]{{{\rm Map}_{#1}\lp #2,#3\rp}}		
\newcommand{\hocolim}[1][]{{{\rm hocolim}\,}}		
\newcommand{\holim}[1][]{{\underset{#1}{\rm holim}\,}}		
\newcommand{\stadi}{{\Delta}}		
\newcommand{\saff}[1][]{{\rm DAff_{#1}}}						
\newcommand{\shi}{{d}}		
\newcommand{\kerre}[1][]{{{\mathcal K}^{#1}}}		
\newcommand{\relide}[1][]{{{\mathfrak I}_{#1}}}			
\newcommand{\irre}[1][]{{{\mathfrak T}_{#1}}}		
\newcommand{\nerve}[1]{{{\mathbf N}\lp #1\rp}}		
\newcommand{\Ker}[1]{{{\rm Ker}\lp #1\rp}}		
\newcommand{\vect}[1][\mathbb C]{{{\rm Vect}_{#1}}}		
\newcommand{\hyperco}[1][\bullet]{{{\mathbb H}^{#1}}}		
\newcommand{\kan}[1][-]{{{\mathfrak K}\lp #1\rp}}	
\newcommand{\negdi}{{\boldsymbol\Lambda}}	
\newcommand{\eva}{{\rm ev}}		
\begin{document}
	
\title[Global shifted potentials for moduli stacks]{Global shifted potentials for moduli stacks of sheaves on Calabi-Yau four-folds}

\maketitle
\smallskip
\author{Dennis Borisov, Ludmil Katzarkov, Artan Sheshmani, Shing-Tung Yau}
{Dennis Borisov${}^{1}$, Ludmil Katzarkov${}^{4, 6,7}$, Artan Sheshmani${}^{2,3,4}$ and  Shing-Tung Yau$^{2,5}$}

\address{${}^1$  Department of Mathematics and Statistics, University of Windsor, 401 Sunset Ave, Windsor Ontario, Canada}

\address{${}^2$ Harvard University CMSA and Physics department, Jefferson Laboratory, 17 Oxford St, Cambridge, MA 02138}
\address{${}^3$ Institut for Matematik , Aarhus Universitet, Ny Munkegade 118 Building 1530, DK-8000 Aarhus C, Denmark}
\address{${}^4$ National Research University Higher School of Economics, Russian Federation, Laboratory of Mirror Symmetry, NRU HSE, 6 Usacheva str.,Moscow, Russia, 119048}
\address{${}^5$ Department of Mathematics, Harvard University, Cambridge, MA 02138, USA}
\address{${}^6$ University of Miami, Coral Gables, FL}
\address{${}^7$ Institute of Mathematics and Informatics, Bulgarian Academy of Sciences}

\date{\today}

\begin{abstract}
It is shown that there are globally defined Lagrangian distributions on the stable loci of derived $\quot$-stacks of coherent sheaves on Calabi--Yau four-folds. Dividing by these distributions produces perfectly obstructed smooth stacks with globally defined $-1$-shifted potentials, whose derived critical loci give back the stable loci of smooth stacks of sheaves in global Darboux form.

\medskip
	
\noindent{\bf MSC codes:} 14A20, 14N35, 14J35, 14F05, 55N22, 53D30

\noindent{\bf Keywords:} Calabi--Yau four-folds, moduli stack of stable sheaves, Derived quotient stacks, Shifted symplectic structures, Invariant Lagrangian distributions, Global shifted potentials. 

\end{abstract}

\tableofcontents

\section*{Introduction}

This paper is concerned with construction of Lagrangian distributions for $-2$-shifted symplectic structures. Just as in \cite{BSY} the shifted symplectic structures are $\mathbb C$-linear, while Lagrangian distributions are $\mathbb R$-linear. The Lagrangian condition is satisfied with respect to the imaginary part of the $\mathbb C$-linear symplectic structure, and, in addition, there is a negative definiteness requirement with respect to the real part. 

These choices originate in the notion of anti-self-dual instantons on Calabi--Yau $4$-folds proposed by Donaldson and Thomas in \cite{DT}. The idea of using shifted symplectic structures defined by Pantev, To\"en, Vaqui\'e and Vezzosi in \cite{PTVV13}, is due to Dominic Joyce, and it was partially implemented in \cite{BoJ13}.

The main result of this paper is the following.

\begin{theo} For any Calabi--Yau $4$-fold, and any choice of a Hilbert polynomial, the corresponding moduli stack of stable points in the derived $Quot$-scheme with the action of a general linear group can be equipped with a globally defined invariant distribution, that is Lagrangian with respect to the imaginary part and negative definite with respect to the real part of the $-2$-shifted symplectic form.\end{theo}

\smallskip

\subsection*{Background} In \cite{BSY} the authors have shown that given a derived scheme equipped with a $-2$-shifted symplectic structure, one can construct a Lagrangian distribution defined on the entire scheme. Locally this problem is trivial due to the local Darboux theorem, proved by Brav, Bussi and Joyce in \cite{BBJ13}. So the main result of \cite{BSY} is that these local constructions can be glued globally.

In this paper we consider derived Artin stacks equipped with $-2$-shifted symplectic structures. Examples of such stacks are given by quotient stacks of derived $\quot$-schemes by the actions of $\GL{\dimglo}$. In \cite{BKS} it is shown that these quotient stacks do carry shifted symplectic structures, obtained in a canonical way from the big stack of all perfect complexes on the Calabi--Yau manifold in question.

\smallskip

Working with derived $\quot$-schemes and their quotients allows us to use some of the results from Geometric Invariant Theory. In particular we use the fact that in the \'etale topology stable loci of $\quot$-schemes are principal $\PGL{\dimglo}$-bundles over the good quotients. There is a simple extension of this result to derived $\quot$-schemes, and, together with the stacky version of the local Darboux theorem from \cite{BBBJ}, we manage to reduce the problem of constructing Lagrangian distributions on the stable loci of derived quotient stacks to the problem of constructing such distributions on derived schemes. This is the problem the authors have solved in \cite{BSY}.

There is another advantage in working with quotient stacks of derived $\quot$-schemes. These schemes, defined by Ciocan-Fonatine and Kapranov in \cite{DerQuot}, are what we call dg manifolds. They are given by enhancing the structure sheaf of a smooth quasi-projective scheme to a sheaf of differential non-positively graded algebras. Having one explicitly defined quasi-projective scheme on which everything happens proves to be very useful for our purposes.

\smallskip

\subsection*{Applications} There are various reasons one might be interested in constructing a Lagrangian distribution on a derived scheme equipped with $-2$-shifted symplectic structure. One is to take the quotient and obtain a perfectly obstructed derived $\cinfty$-manifold. This manifold might be oriented as in \cite{CGJ}, and then, assuming the manifold is compact, one can hope to produce some invariants.

In this paper, as in \cite{BSY}, we have a different goal. The quotient by a Lagrangian distribution is not just a perfectly obstructed $\cinfty$-manifold, or perfectly obstructed stack in our case. As it is explained in \cite{P14} this quotient carries a $-1$-shifted potential, i.e.\@ a section of the bundle dual to the bundle of obstructions. Derived critical locus of this potential reconstructs the entire moduli stack we have started with (in its $\cinfty$-version).

It is here that having just one smooth quasi-projective scheme underlying the derived $\quot$-scheme proves so useful. Once we have established that there is a globally defined Lagrangian distribution on the quotient stack, we can write this distribution as a globally defined $\GL{\dimglo}$-invariant subcomplex of the tangent complex of the derived $\quot$-scheme. After dividing by this subcomplex we are left with the same smooth quasi-projective variety with the same action of $\GL{n}$, and with a $\GL{n}$-linearized bundle together with a $\GL{n}$-invariant section and a $\GL{n}$-invariant co-section. This simple set of data encodes the entirety of the quotient stack.\footnote{A somewhat similar direction of research, but using very different methods, is pursued by Thomas and Oh, \cite{Oh}.}Moreover, constructing the corresponding derived critical locus of the co-section gives us this stack \emph{globally} in a Darboux form.

\smallskip

Existence of this globally defined potential might turn out to be important in its own right. As was pointed out in \cite{P14} it is often interesting to have moduli spaces realized as critical loci. Usually these are critical loci of functions, albeit defined possibly on infinite dimensional manifolds. In our case we realize a moduli stack as a critical locus not of a function, but of a $-1$-shifted function -- the co-section. 

Being shifted this function is not defined on a manifold, but on a perfectly obstructed manifold, which according to Uhlenbeck--Yau theorem \cite{UY86} corresponds to the moduli space of ${\rm Spin}(7)$-instantons. So in fact we have a layering of moduli problems, where moduli of ${\rm SU}(4)$-connections appear as critical loci on the moduli of ${\rm Spin}(7)$-connections. Here we follow \cite{Conan} in classifying connections by normed division algebras.

\smallskip

\subsection*{Contents of the paper} In Section \ref{one} we recall the construction of derived $\quot$-schemes and provide the details on actions of $\GL{\dimglo}$, noting that this action factors through $\PGL{\dimglo}$ also in the derived case. We finish the section showing that with respect to the \'etale topology the stable locus is a principal $\PGL{\dimglo}$-bundle.

In Section \ref{twoone} we analyze homotopically closed forms on quotient stacks. Spaces of such forms can be described as cosimplicial objects in the category of simplicial vector spaces, where cosimplicial dimension corresponds to the simplicial dimension in the bar construction of the action. We show that using cosimplicial-simplicial normalization we can describe homotopically closed forms on quotient stacks as cocycles in dg vector spaces consisting of formal power series, where the power of the formal parameter corresponds to the cosimplicial dimension. This is similar to the formal power series arising from de Rham stacks in the theory of homotopically closed forms on derived schemes, as defined in \cite{PTVV13}.

Section \ref{twotwo} begins with an analysis of the strictly invariant homotopically closed forms on quotient stacks. If we would like to work with homotopically closed forms on a quotient of a derived scheme $\dgsch$ with respect to an action of a group $\algr$, we might want to look for such forms on $\dgsch$ itself, and see what conditions they need to satisfy and what additional structures to have, to describe forms on the quotient stack. A (strict) invariance would be obviously needed and some pairing with the Lie algebra of the group would also be required. We give a precise definition and show that, in case $\algr$ is linearly reductive, any homotopically closed form on the quotient stack can be written in the strictly invariant way.

The main part of Section \ref{twotwo} is dedicated to analysis of integrable distributions on quotient stacks and isotropic structures on them, if the stacks are equipped with homotopically closed forms. Our starting point is the definition in terms of stacks of $\infty$-categories by To\"en and Vezzosi found in \cite{AlgFoliations}, which we strictify, i.e.\@ make strictly invariant with respect to the group action, in order to be able to deal with these objects effectively. The result is a simple description of isotropic structures in terms of formal power series.

Finally in Section \ref{three} we use the machinery from the previous section to reduce the problem of constructing Lagrangian distributions on the quotient stacks of derived $\quot$-schemes coming from Calabi--Yau four-folds to the problem solved in \cite{BSY}. This section ends with some very simple linear algebra showing existence of globally defined subcomplexes of the tangent complex on the derived $\quot$-scheme that represent these distributions.

\subsection*{Acknowledgments}  \includegraphics[scale=0.2]{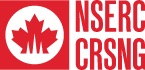} The first author acknowledges the support of the Natural Sciences and Engineering Research Council of Canada (NSERC), [RGPIN-2020-04845].

Cette recherche a été financée par le Conseil de recherches en sciences naturelles et en génie du Canada (CRSNG), [RGPIN-2020-04845].

\smallskip

The first author would like to thank Dominic Joyce, Tony Pantev and Dingxin Zhang for very helpful conversations. The second author was partially supported by NSF Grant, Simons Investigatior  Award HMS, Simons Collaboration Award HMS, National Science Fund of Bulgaria, National Scientific Program “Excellent Research and People for the Development of European
Science” (VIHREN), Project No. KP-06-DV-7, HSE University Basic Research Program.The third author would like to thank Dennis Gaitsgory, Amin Gholampour, Martijn Kool, Naichung Conan Leung, and Tony Pantev for many helpful discussions and commenting on the first versions of this article. Research of A.S. and D. B. was partially supported by generous Aarhus startup research grant of A. S., and partially by the NSF DMS-1607871, NSF DMS-1306313, the Simons 38558, and Laboratory of Mirror Symmetry NRU HSE, RF Government grant, ag. No 14.641.31.0001. A.S. would like to further sincerely thank the Center for Mathematical Sciences and Applications at Harvard University, the center for Quantum Geometry of Moduli Spaces at Aarhus University, and the Laboratory of Mirror Symmetry in Higher School of Economics, Russian federation, for the great help and support. S.-T. Y. was partially supported by NSF DMS-0804454,
NSF PHY-1306313, and Simons 38558.

\section{Derived $\quot$-schemes and actions of $\GL{\dimglo}$}\label{one}

Let $\CY$ be a reduced\extra{\footnote{This requirement ensures that spaces of global sections of twisted sheaves have the expected dimensions, e.g.\@ $\dim(\Gamma(\CY,\twish\CY))$ is the number of connected components.}} connected\extra{\footnote{On a disconnected scheme the problem of moduli of sheaves breaks into separate problems, one for each connected component. Hence it is natural to assume existence of only one connected component.}} projective scheme over $\mathbb C$, equipped with a very ample line bundle $\twish[1]\CY$,\extra{\footnote{According to the standard definition (e.g.\@ Hartshorne)  for a morphism of schemes to be projective is a \emph{property}, not a structure. A particular choice of a very ample line bundle is a structure.}} and let $\hilpo\in\mathbb Q[t]$. As the family of \fla{semi-stable} coherent sheaves on $\CY$ with Hilbert polynomial $\hilpo$ is bounded (e.g.\@ \cite{HuyLehn} Thm.\@ 3.3.7 p.\@ 78), there is $\casmu\in\posi$, s.t.\@ any such sheaf $\coh$ is $\casmu$-regular (\cite{Kleiman} Thm.\@ 1.13, p.\@ 623). From $\casmu$-regularity of $\coh$ it follows that $\forall i\geq 0$ $\twi{\coh}{i+\casmu}$ is globally generated and 
	\begin{equation}\label{surjectivity}\Gamma\lp\CY,\twish[i]\CY\rp\otimes\Gamma\lp\CY,\twi{\coh}{\casmu}\rp\longrightarrow\Gamma\lp\CY,\twi{\coh}{i+\casmu}\rp\end{equation} 
is surjective (\cite{Mumford} p.\@ 100). In particular we can find any such $\coh$ among quotients of $\twi{\sumto{\twish{\CY}}{\dimglo}}{-\casmu}$, where $\dimglo:=\hilpo(\casmu)$. So we need to consider the $\quot$-scheme $\quotof{\twi{\sumto{\twish{\CY}}{\dimglo}}{-\casmu}}{\hilpo}$ (\cite{GrothIV} Thm.\@ 3.2, p.\@ 260). The kernels of the quotients $\twi{\sumto{\twish{\CY}}{\dimglo}}{-\casmu}\twoheadrightarrow\coh$ do not have to be globally generated, however, the family of all these kernels is bounded (\cite{GrothIV} Prop.\@ 1.2, p.\@ 252), and hence $\casmu'$-regular for some $\casmu'\in\posi$. Therefore there is $\lebo\in\posi$ s.t.\@ $\forall\addshi\geq\lebo$, $\twi{\sumto{\twish{\CY}}{\dimglo}}{-\casmu}$ is $\addshi$-regular as well as any $\coh$ as above, and the kernel of $\twi{\sumto{\twish{\CY}}{\dimglo}}{-\casmu}\twoheadrightarrow\coh$. Then surjectivity of maps as in (\ref{surjectivity}) gives us for each $\addshi\geq\lebo$ a realization of  $\quotof{\twi{\sumto{\twish{\CY}}{\dimglo}}{-\casmu}}{\hilpo}$ as a closed subscheme
	\begin{equation}\quotof{\twi{\sumto{\twish{\CY}}{\dimglo}}{-\casmu}}{\hilpo}\hooklongrightarrow\grass{\dimgra[\addshi]-\hilpo(\addshi)}{\dimgra[\addshi]},\end{equation} 
where $\dimgra[\addshi]:=\dimglo\hilpo[{\twish{\CY}}](\addshi-\casmu)$ and $\grass{\dimgra[\addshi]-\hilpo(\addshi)}{\dimgra[\addshi]}$ is the Grassmannian of $\dimgra[\addshi]-\hilpo(\addshi)$-dimensional subspaces in an $\dimgra[\addshi]$-dimensional space (\cite{GrothIV} Lemmas 3.3 p.\@ 261 and 3.7 p.\@ 264).

\bigskip

The action of $\GL{\dimglo}$ on $\sumto{\twish{\CY}}{\dimglo}$ induces an action of $\GL{\dimglo}$ on $$\Gamma\lp \CY,\twi{\sumto{\twish{\CY}}{\dimglo}}{\addshi-\casmu}\rp$$ where each $\dimglo\times\dimglo$ matrix becomes a matrix of $\hilpo[\twish{\CY}](\addshi-\casmu)\times\hilpo[\twish{\CY}](\addshi-\casmu)$ scalar matrices.\footnote{In particular, scalar $\dimglo\times\dimglo$-matrices are mapped to scalar matrices.} Thus we have a right\footnote{We regard global sections of $\sumto{\twish{\CY}}{\dimglo}$ as row vectors, i.e.\@ the action of $\GL{\dimglo}$ is from the right.} action of $\GL{\dimglo}$ on $\grass{\dimgra[\addshi]-\hilpo(\addshi)}{\dimgra[\addshi]}$. The Pl\"ucker embedding 
	\begin{equation}\grass{\dimgra[\addshi]-\hilpo(\addshi)}{\dimgra[\addshi]}\hooklongrightarrow\mathbb P^{\dimpro{\addshi}},\quad \dimpro{\addshi}={\dimgra[\addshi]\choose{\hilpo(\addshi)}}-1\end{equation}
comes with a $\GL{\dimglo}$-linearization of the very ample line bundle, that is induced from the canonical $\GL{\dimgra[\addshi]}$-linearization. As $\quotof{\twi{\sumto{\twish{\CY}}{\dimglo}}{-\casmu}}{\hilpo}\subseteq\grass{\dimgra[\addshi]-\hilpo(\addshi)}{\dimgra[\addshi]}$ is $\GL{\dimglo}$-invariant, the induced very ample line bundle on $\quotof{\twi{\sumto{\twish{\CY}}{\dimglo}}{-\casmu}}{\hilpo}$ is $\GL{\dimglo}$-linearized (e.g.\@ \cite{HuyLehn} p.\@ 101).

\smallskip

As it is explained in \cite{DerQuot}, choosing $\ribo>\lebo$ large enough we can realize $\quotof{\twi{\sumto{\twish{\CY}}{\dimglo}}{-\casmu}}{\hilpo}$ as the classical locus in a derived scheme $$\derqof{\twi{\sumto{\twish{\CY}}{\dimglo}}{-\casmu}}{\hilpo}{\lebo,\ribo},$$ that is fibered over
	\begin{equation}\derquz[\lebo,\ribo]:=\underset{\lebo\leq\addshi\leq\ribo}\prod\grass{\dimgra[\addshi]-\hilpo(\addshi)}{\dimgra[\addshi]},\end{equation}
which is a projective variety through the composite morphism
	\begin{equation}\label{allpro}\derquz[\lebo,\ribo]\hooklongrightarrow\underset{\lebo\leq\addshi\leq\ribo}\prod\mathbb P^{\dimpro{\addshi}}\hooklongrightarrow\mathbb P^{\underset{\lebo\leq\addshi\leq\ribo}\prod\lp\dimpro{\addshi}+1\rp-1},\end{equation}	
with the right arrow being a Segre embedding. Here is the construction of this derived scheme. For each $\addshi\in[\lebo,\ribo]$ let $\tabu[\addshi]$ be the tautological sub-bundle of rank $\dimgra[\addshi]-\hilpo(\addshi)$ of the trivial bundle $\tribun{\addshi}$ of rank $\dimgra[\addshi]=\dim\lp\Gamma\lp\CY,\twi{\sumto{\twish{\CY}}{\dimglo}}{\addshi-\casmu}\rp\rp$ on $\grass{\dimgra[\addshi]-\hilpo(\addshi)}{\dimgra[\addshi]}$. Pulling back to $\derquz[\lebo,\ribo]$ we denote
	\begin{equation}\tabu[\lebo,\ribo]:=\underset{\lebo\leq\addshi\leq\ribo}\bigboxplus\tabu[\addshi],\quad\tribun{\lebo,\ribo}:=\underset{\lebo\leq\addshi\leq\ribo}\bigboxplus\tribun{\addshi}.\end{equation}
Let $\gring[1,\ribo-\lebo]:=\underset{1\leq j\leq\ribo-\lebo}\bigoplus\gring[j]$, $\gring[j]:=\Gamma\lp \CY,\twish[j]{\CY}\rp$. We are interested in the following bundles on $\derquz[\lebo,\ribo]$:
\begin{equation}\label{dglie}\lf\Homo{\gring[1,\ribo-\lebo]^{\otimes^\alpha}\otimes\tabu[\lebo,\ribo]}{\tabu[\lebo,\ribo]}\rf_{\alpha\geq 1},\quad
	\lf\Homo{\gring[1,\ribo-\lebo]^{\otimes^{\alpha-1}}\otimes\tabu[\lebo,\ribo]}{\tribun{\lebo,\ribo}}\rf_{\alpha\geq 1},\end{equation}
where $\Homo{-}{-}$ stands for the bundle of \emph{homogeneous} morphisms (we view each $\gring[j]$, $\tabu[j]$ and $\tribun{j}$ as having homogeneous degree $j$). The total space of the bundle $\Homo{\gring[1,\ribo-\lebo]\otimes\tabu[\lebo,\ribo]}{\tabu[\lebo,\ribo]}$ will be $\derqof[0]{\twi{\sumto{\twish{\CY}}{\dimglo}}{-\casmu}}{\hilpo}{\lebo,\ribo}$, i.e.\@ the (cohomological) degree $0$ part of $\derqof{\twi{\sumto{\twish{\CY}}{\dimglo}}{-\casmu}}{\hilpo}{\lebo,\ribo}$. To construct the rest of this derived scheme, and especially the differential, we need to consider the $\twish{\derquz[\lebo,\ribo]}$-linear dg-Lie algebra structure on
\begin{equation}\label{biglie}\biglie:=
\underset{\alpha\geq 1}\bigoplus\biglie[\alpha],\,\,\, \biglie[\alpha]:=\Homo{\gring[1,\ribo-\lebo]^{\otimes^\alpha}\otimes\tabu[\lebo,\ribo]}{\tabu[\lebo,\ribo]}\oplus
	\Homo{\gring[1,\ribo-\lebo]^{\otimes^{\alpha-1}}\otimes\tabu[\lebo,\ribo]}{\tribun{\lebo,\ribo}}.\end{equation}
We assign \emph{cohomological} degree $\alpha$ to $\biglie[\alpha]$, the bracket is given by commutator of the composition, and the differential is given by commutator with the multiplication on $\gring[1,\ribo-\lebo]$ and with the $\gring[1,\ribo-\lebo]$-module structure on $\tribun{\lebo,\ribo}$ (e.g.\@ \cite{DerQuot} \S3.4, \cite{Ainfty} \S4.2). Applying the bar-construction to $\biglie$ we obtain a bundle of differential non-negatively graded co-commutative co-algebras on $\derquz[\lebo,\ribo]$ (e.g.\@ \cite{AlgOperads}  \S11). As this construction involves a shift down by $1$ in the cohomological degree, each $\biglie[\alpha]$ is re-assigned to degree $\alpha-1$. After taking the degree-wise linear dual over $\twish{\derquz[\lebo,\ribo]}$ this bundle of dg co-algebras becomes a bundle of differential non-positively graded commutative algebras $\bigring{\lebo,\ribo}$ (\cite{DerQuot} \S3.5, \cite{BKS} p.\@ 14). 

The spectrum of $\bigring{\lebo,\ribo}$ is a dg scheme fibered over $\derquz[\lebo,\ribo]$, but it is not yet $\derqof{\twi{\sumto{\twish{\CY}}{\dimglo}}{-\casmu}}{\hilpo}{\lebo,\ribo}$ that we want. The reason is that $\bigring{\lebo,\ribo}$ parametrizes all possible $\ainty$-module structures on $\tabu[\lebo,\ribo]$ over $\gring[1,\ribo-\lebo]$ and compatible $\ainty$-morphisms $\tabu[\lebo,\ribo]\rightarrow\tribun{\lebo,\ribo}$, there is no requirement for these morphisms to extend the canonical inclusion $\tabu[\lebo,\ribo]\hookrightarrow\tribun{\lebo,\ribo}$. To impose this requirement we need to consider the corresponding section $\derquz[\lebo,\ribo]\rightarrow\Homo{\tabu[\lebo,\ribo]}{\tribun{\lebo,\ribo}}$. This section defines an inclusion of the total spaces of bundles
\begin{equation}\label{inclusion}\Homo{\gring[1,\ribo-\lebo]\otimes\tabu[\lebo,\ribo]}{\tabu[\lebo,\ribo]}\hooklongrightarrow\Homo{\gring[1,\ribo-\lebo]\otimes\tabu[\lebo,\ribo]}{\tabu[\lebo,\ribo]}\underset{\derquz[\lebo,\ribo]}\times\Homo{\tabu[\lebo,\ribo]}{\tribun{\lebo,\ribo}}.\end{equation}
Notice that this inclusion is not linear, as the $0$-section is not mapped to the $0$-section (unless $\tabu[\lebo,\ribo]$ is the $0$-subbundle of $\tribun{\lebo,\ribo}$). Nonetheless, the image of (\ref{inclusion}) is a sub-scheme of the degree $0$ part of the spectrum of $\bigring{\lebo,\ribo}$. Restricting $\bigring{\lebo,\ribo}$ to this sub-scheme (i.e.\@ pulling back the dg structure sheaf to $\Homo{\gring[1,\ribo-\lebo]\otimes\tabu[\lebo,\ribo]}{\tabu[\lebo,\ribo]}$ over (\ref{inclusion})) we obtain another dg scheme fibered over $\derquz[\lebo,\ribo]$, which is $\derqof{\twi{\sumto{\twish{\CY}}{\dimglo}}{-\casmu}}{\hilpo}{\lebo,\ribo}$, the derived Quot-scheme that we want.\footnote{After this paper appeared on arXiv, the authors of \cite{DerQuot} have communicated to us that the main result of \cite{DerQuot} has a fundamental flaw. While fixing this flaw cannot be done within this paper, there is a straightforward way to do it, resulting in a dg scheme with the same properties as claimed by \cite{DerQuot}. This will be pursued elsewhere.}

\bigskip
	
Having constructed $\derqof{\twi{\sumto{\twish{\CY}}{\dimglo}}{-\casmu}}{\hilpo}{\lebo,\ribo}$ we would like to look at the action of $\GL{\dimglo}$ on it. There are canonical $\GL{\dimgra[\addshi]}$-linearizations of $\tabu[\addshi]$, $\tribun{\addshi}$ for each $\addshi\in[\lebo,\ribo]$. Defining 
	\begin{equation}\G[\lebo,\ribo]:=\underset{\lebo\leq\addshi\leq\ribo}\prod\GL{\dimgra[\addshi]}\end{equation}
we have the obvious action of $\G[\lebo,\ribo]$ on $\derquz[\lebo,\ribo]$ and $\G[\lebo,\ribo]$-linearizations of $\tabu[\lebo,\ribo]$, $\tribun{\lebo,\ribo}$. The very ample line bundle $\twish[1]{\derquz[\lebo,\ribo]}$ obtained from (\ref{allpro}) is the tensor product of line bundles pulled back from the factors, thus we have a $\G[\lebo,\ribo]$-linearization of $\twish[1]{\derquz[\lebo,\ribo]}$. Combining $\lf \GL{\dimglo}\hookrightarrow\GL{\dimgra_\addshi}\;|\; \addshi\in[\lebo,\ribo]\rf$ into one $\GL{\dimglo}\hookrightarrow\G[\lebo,\ribo]$ we have $\GL{\dimglo}$ acting on $\derquz[\lebo,\ribo]$ and $\GL{\dimglo}$-linearizations of $\tabu[\lebo,\ribo]$, $\tribun{\lebo,\ribo}$, $\twish[1]{\derquz[\lebo,\ribo]}$. This gives us, in a canonical way, $\GL{\dimglo}$-linearizations of various other bundles on $\derquz[\lebo,\ribo]$, obtained from $\tabu[\lebo,\ribo]$, $\tribun{\lebo,\ribo}$, as well as actions of $\GL{\dimglo}$ on the total spaces of these bundles.

In particular we have an action of $\GL{\dimglo}$ on the bundle of graded commutative algebras, that underlies $\bigring{\lebo,\ribo}$, as this bundle consists of tensor products of duals of (\ref{dglie}). We would like to see if this action of $\GL{\dimglo}$ is compatible with the differential on $\bigring{\lebo,\ribo}$. It is enough to check compatibility for each closed point $\matri\in\GL{\dimglo}$, and since the bar-construction is a functor, we can equivalently check that the action of $\matri$ is compatible with the bracket and the differential on (\ref{biglie}). 

Compatibility with the bracket is obvious, since the the action of $\matri$ is by conjugation and the bracket is the commutator of the composition of morphisms. The differential on $\biglie$ is a sum of two parts: the first one is given by the multiplication on $\gring[1,\ribo-\lebo]$ and the second by the $\gring[1,\ribo-\lebo]$-module structure on $\tribun{\lebo,\ribo}$. Since the action of $\GL{\dimglo}$ on $\gring[1,\ribo-\lebo]$ is trivial, clearly this action commutes with the first summand. Commutativity with the second summand follows from $\gring[1,\ribo-\lebo]$-linearity of the action of $\GL{\dimglo}$ on $\tribun{\lebo,\ribo}$.\footnote{Notice that the action of $\G[\lebo,\ribo]$ on $\tribun{\lebo,\ribo}$ is not $\gring[1,\ribo-\lebo]$-linear, in general.}

\smallskip

It is easy to see that $\derqof{\twi{\sumto{\twish{\CY}}{\dimglo}}{-\casmu}}{\hilpo}{\lebo,\ribo}$ is invariant with respect to the action of $\GL{\dimglo}$ on the spectrum of $\bigring{\lebo,\ribo}$. Indeed, the section $\derquz[\lebo,\ribo]\rightarrow\Homo{\tabu[\lebo,\ribo]}{\tribun{\lebo,\ribo}}$, we used to define (\ref{inclusion}), is invariant with respect to the action of $\GL{\dimglo}$ on $\derquz[\lebo,\ribo]$ and on the total space of $\Homo{\tabu[\lebo,\ribo]}{\tribun{\lebo,\ribo}}$, i.e.\@ the image of (\ref{inclusion}) is $\GL{\dimglo}$-invariant. Therefore we have an induced $\GL{\dimglo}$-action on $\derqof{\twi{\sumto{\twish{\CY}}{\dimglo}}{-\casmu}}{\hilpo}{\lebo,\ribo}$, and the projection 
	\begin{equation}\label{motogra}\derqof{\twi{\sumto{\twish{\CY}}{\dimglo}}{-\casmu}}{\hilpo}{\lebo,\ribo}\longrightarrow\derquz[\lebo,\ribo]\end{equation} 
is $\GL{\dimglo}$-invariant. Recall that the scalar matrices in $\GL{\dimglo}$ act as scalar matrices on $\tabu[\lebo,\ribo]$ and on $\tribun{\lebo,\ribo}$. Therefore, since the action of $\GL{\dimglo}$ on $\derqof{\twi{\sumto{\twish{\CY}}{\dimglo}}{-\casmu}}{\hilpo}{\lebo,\ribo}$ is given by conjugation, this action factors through $\PGL{\dimglo}$, just like the $\GL{\dimglo}$-action on $\derquz[\lebo,\ribo]$.

\bigskip

Now we look at the stable locus in $\derquz[\lebo,\ribo]$, and its pre-image in the derived Quot scheme $\derqof{\twi{\sumto{\twish{\CY}}{\dimglo}}{-\casmu}}{\hilpo}{\lebo,\ribo}$. This requires us to restrict our attention temporarily to the action of $\SL{\dimglo}$. Notice that both $\derquz[\lebo,\ribo]$ and  $\derqof[0]{\twi{\sumto{\twish{\CY}}{\dimglo}}{-\casmu}}{\hilpo}{\lebo,\ribo}$ are reduced and smooth. So, as in \cite{Newstead}, it makes sense to simplify our treatment and view these schemes as varieties, i.e.\@ to consider only the closed points.

The very ample line bundle on $\derquz[\lebo,\ribo]$, obtained from (\ref{allpro}), and its $\SL{\dimglo}$-linearization give us the GIT semi-stable locus $\semiquz[\lebo,\ribo]\subseteq\derquz[\lebo,\ribo]$, which contains the product of semi-stable loci $\semiquz[\addshi]\subseteq\derquz[\addshi]$, $\addshi\in[\lebo,\ribo]$. Within each $\semiquz[\addshi]$ we have the principal Luna stratum $\staquz[\addshi]$ consisting of $\pt_\addshi\in\semiquz[\addshi]$, for which we can choose an $r\in\mathbb N$ and an $\SL{\dimglo}$-invariant $\secti_\addshi\in\Gamma\lp\derquz[\addshi],\twish[r]{\derquz[\addshi]}\rp$ that does not vanish at $\pt_\addshi$, s.t.\@ the evaluation map $\SL{\dimglo}\rightarrow\lf\secti_{\addshi}\neq 0\rf\subseteq\derquz[\addshi]$ at $\pt_\addshi\in\derquz[\addshi]$ is proper, and, moreover, the stabilizer of $\pt_\addshi$ is the smallest possible, which in this case is $\mathbb Z_\dimglo\subset\SL{\dimglo}$. Then clearly $\pt:=\underset{\addshi\in[\lebo,\ribo]}\prod\pt_{\addshi}$ belongs to the principal Luna stratum $\staquz[\lebo,\ribo]\subseteq\semiquz[\lebo,\ribo]$, which is a Zariski open sub-variety. Let 
	\begin{equation}\staqof{\twi{\sumto{\twish{\CY}}{\dimglo}}{-\casmu}}{\hilpo}{\lebo,\ribo}\subseteq\derqof{\twi{\sumto{\twish{\CY}}{\dimglo}}{-\casmu}}{\hilpo}{\lebo,\ribo}\end{equation}
be the pre-image of $\staquz[\lebo,\ribo]$ with respect to (\ref{motogra}). It is obvious that the actions of $\PGL{\dimglo}$ on $\staqof{\twi{\sumto{\twish{\CY}}{\dimglo}}{-\casmu}}{\hilpo}{\lebo,\ribo}$, $\staquz[\lebo,\ribo]$ are free. As $\staquz[\lebo,\ribo]$ lies within the semi-stable locus of $\derquz[\lebo,\ribo]$ we have a good quotient $\staquz[\lebo,\ribo]\rightarrow\quo{\staquz[\lebo,\ribo]}{\PGL{\dimglo}}$. Since (\ref{motogra}) is affine, i.e.\@ there is an affine atlas on $\derquz[\lebo,\ribo]$, s.t.\@ pre-image of each chart is given by one dg algebra, we have a good quotient 					
	\begin{equation}\staqof{\twi{\sumto{\twish{\CY}}{\dimglo}}{-\casmu}}{\hilpo}{\lebo,\ribo}\rightarrow\quo{\staqof{\twi{\sumto{\twish{\CY}}{\dimglo}}{-\casmu}}{\hilpo}{\lebo,\ribo}}{\PGL{\dimglo}}\end{equation}
as well. An easy way to see it is to rewrite $\staqof{\twi{\sumto{\twish{\CY}}{\dimglo}}{-\casmu}}{\hilpo}{\lebo,\ribo}$ as spectrum of a  simplicial algebra, i.e.\@ as a co-simplicial diagram of affine varieties over $\staquz[\lebo,\ribo]$, and then use functoriality of good quotients with respect to affine morphisms (e.g.\@ \cite{Newstead} Prop.\@ 3.12, p.\@ 58).

It is known (e.g.\@ \cite{HuyLehn} Cor.\@ 4.3.5, p.\@ 102) that $\staquz[\lebo,\ribo]\rightarrow\quo{\staquz[\lebo,\ribo]}{\PGL{\dimglo}}$ is a principal $\PGL{\dimglo}$-bundle in the \'etale topology. The following proposition is a straightforward extension of this statement to $\staqof{\twi{\sumto{\twish{\CY}}{\dimglo}}{-\casmu}}{\hilpo}{\lebo,\ribo}$. Given an \'etale chart $U\rightarrow\quo{\staquz[\lebo,\ribo]}{\PGL{\dimglo}}$ we will denote by $$\lp\staqof{\twi{\sumto{\twish{\CY}}{\dimglo}}{-\casmu}}{\hilpo}{\lebo,\ribo}\rp_U$$ and$$\lp\quo{\staqof{\twi{\sumto{\twish{\CY}}{\dimglo}}{-\casmu}}{\hilpo}{\lebo,\ribo}}{\PGL{\dimglo}}\rp_U$$ the corresponding pullbacks.
\begin{proposition}\label{LocalProduct} We have
		\begin{align}&\staqof{\twi{\sumto{\twish{\CY}}{\dimglo}}{-\casmu}}{\hilpo}{\lebo,\ribo}\cong\notag\\
		&\staquz[\lebo,\ribo]\underset{\quo{\staquz[\lebo,\ribo]}{\PGL{\dimglo}}}\times\lp\quo{\staqof{\twi{\sumto{\twish{\CY}}{\dimglo}}{-\casmu}}{\hilpo}{\lebo,\ribo}}{\PGL{\dimglo}}\rp,\end{align}
in particular, for any $\pt\in\quo{\staquz[\lebo,\ribo]}{\PGL{\dimglo}}$ there is an \'etale  chart $U\rightarrow\quo{\staquz[\lebo,\ribo]}{\PGL{\dimglo}}$, such that 
		\begin{align}&\lp\staqof{\twi{\sumto{\twish{\CY}}{\dimglo}}{-\casmu}}{\hilpo}{\lebo,\ribo}\rp_U\cong\notag\\
		& \PGL{\dimglo}\times\lp\quo{\staqof{\twi{\sumto{\twish{\CY}}{\dimglo}}{-\casmu}}{\hilpo}{\lebo,\ribo}}{\PGL{\dimglo}}\rp_U,\end{align}
and the action of $\PGL{\dimglo}$ is given by its left action on itself.\end{proposition}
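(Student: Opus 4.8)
The plan is to exhibit the canonical comparison morphism into the fibre product and to verify that it is an isomorphism after base change to an \'etale cover of the good quotient. For brevity write $P:=\staquz[\lebo,\ribo]$, $X:=\staqof{\twi{\sumto{\twish{\CY}}{\dimglo}}{-\casmu}}{\hilpo}{\lebo,\ribo}$, $B:=\quo{P}{\PGL{\dimglo}}$ and $Y:=\quo{X}{\PGL{\dimglo}}$. Thus $P\to B$ is the \'etale-locally trivial principal $\PGL{\dimglo}$-bundle, and (\ref{motogra}) restricts to a $\PGL{\dimglo}$-equivariant \emph{affine} morphism $X\to P$. Being affine and equivariant along $P\to B$, this morphism descends to an affine morphism $Y\to B$, and the square formed by $X\to P$, $X\to Y$, $P\to B$ and $Y\to B$ commutes by construction. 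This commuting square furnishes the canonical $P$-morphism $X\to P\underset{B}\times Y$ whose composite with the first projection is (\ref{motogra}).

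First I would note that both sides are affine over $P$: the source $X$ by affineness of (\ref{motogra}), and the target $P\underset{B}\times Y$ because $Y\to B$ is affine and affineness is stable under base change. Hence the comparison morphism is a map of affine $P$-schemes, determined entirely by a homomorphism of the associated sheaves of non-positively graded dg algebras on $P$. To prove it an isomorphism it therefore suffices to test this homomorphism, and this may be done \'etale-locally on $B$.

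Next I would fix, for a given point of $B$, an \'etale chart $U\to B$ trivializing the bundle, $P\underset{B}\times U\cong\PGL{\dimglo}\times U$ with $\PGL{\dimglo}$ acting by left translation on the first factor; such charts exist by \cite{HuyLehn} Cor.\@ 4.3.5, p.\@ 102. Base-changing along $U\to B$ and using $\lp P\underset{B}\times Y\rp\underset{B}\times U\cong\lp P\underset{B}\times U\rp\underset{U}\times\lp Y\underset{B}\times U\rp\cong\PGL{\dimglo}\times Y_U$, where $Y_U:=Y\underset{B}\times U$, reduces the claim to the following: for the $\PGL{\dimglo}$-equivariant sheaf of dg algebras defining $X$ over $\PGL{\dimglo}\times U$, the natural map from the pullback of its $\PGL{\dimglo}$-invariants to the sheaf itself is an isomorphism. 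Since the action is free and by left translation on the $\PGL{\dimglo}$-factor, forming invariants is restriction along $\{e\}\times U\hookrightarrow\PGL{\dimglo}\times U$, and pullback along $\PGL{\dimglo}\times U\to U$ restores the equivariant sheaf. This slice identification gives the global isomorphism and, read off over $U$ (with $X_U:=X\underset{B}\times U$), the displayed \emph{in particular} statement $X_U\cong\PGL{\dimglo}\times Y_U$ with $\PGL{\dimglo}$ acting by its left action on itself.

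The essential point, and the main obstacle, is not the classical geometry, which is the standard fact that an equivariant affine morphism to a principal bundle is pulled back from its quotient, but the propagation of the derived (dg) structure through this argument. Concretely I must ensure that forming $\PGL{\dimglo}$-invariants of the equivariant dg-algebra sheaf $\bigring{\lebo,\ribo}$ restricted to $P$, descending it along $P\to B$, and pulling it back are all compatible with the differential, including its in general non-trivial constant term noted after (\ref{inclusion}). Here I would invoke exactly the tools already used to construct $Y$: the functoriality of good quotients along affine morphisms (\cite{Newstead} Prop.\@ 3.12, p.\@ 58), together with the freeness of the $\PGL{\dimglo}$-action and the linear reductivity of $\PGL{\dimglo}$. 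Freeness makes $P\to B$ a bundle and renders the slice identification exact, while linear reductivity guarantees that the invariants functor is exact and commutes with the \'etale base change $U\to B$, so the dg enhancement descends and base-changes with no correction terms. Assembling the \'etale-local isomorphisms then yields the global statement.
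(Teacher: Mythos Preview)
Your argument is correct, but it proceeds along a different line than the paper's own proof. You set up the canonical comparison map $X\to P\times_B Y$ between affine $P$-schemes and verify it is an isomorphism \'etale-locally on $B$, reducing to the standard slice identification for equivariant quasi-coherent sheaves on a trivialized principal bundle, then check that the dg differential passes through invariants and pullback. The paper instead exploits the explicit construction of the derived $\quot$-scheme: it is built from the bundles $\biglie[\alpha]$ in (\ref{biglie}) by taking sums, $\twish{\derquz[\lebo,\ribo]}$-linear duals, tensor products, and quotients. The key observation there is that these bundles carry a $\PGL{\dimglo}$-linearization (not merely a $\GL{\dimglo}$-linearization, since the action is by conjugation), so the classical descent theorem for linearized bundles on a principal bundle (\cite{HuyLehn} Thm.\@ 4.2.14) applies directly to each building block and to the $\PGL{\dimglo}$-equivariant morphisms between them; the dg structure then descends automatically because it is expressed entirely through such morphisms. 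Your approach is more general---it would apply to any $\PGL{\dimglo}$-equivariant affine morphism to $P$---while the paper's is shorter and sidesteps your final paragraph entirely: once the bundles and the maps between them descend, there is nothing further to check about the differential, including its constant term. Incidentally, your invocation of linear reductivity is not strictly needed here, since on a principal bundle the equivalence between equivariant sheaves upstairs and sheaves on the base holds without any reductivity hypothesis.
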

\begin{proof} To construct $\derqof{\twi{\sumto{\twish{\CY}}{\dimglo}}{-\casmu}}{\hilpo}{\lebo,\ribo}$ we began with the bundles (\ref{biglie}) on $\derquz[\lebo,\ribo]$, then proceed by taking their sums, $\twish{\derquz[\lebo,\ribo]}$-linear duals and tensor products, and eventually quotients. It is important to notice that (\ref{biglie}) carry a $\PGL{\dimglo}$-linearization, not just a $\GL{\dimglo}$-linearization, therefore all these bundles and the $\PGL{\dimglo}$-equivariant morphisms between them descend to $\quo{\staquz[\lebo,\ribo]}{\PGL{\dimglo}}$ (e.g.\@ \cite{HuyLehn} Thm.\@ 4.2.14, p.\@ 98).\end{proof}

\section{Lagrangian distributions on quotient stacks}

In this section we analyze shifted symplectic structures and, more generally, homotopically closed forms on quotient stacks of derived schemes with respect to actions by linearly reductive groups. To be able to deal with these structures we need to have an efficient way of presenting homotopically correct cotangent complexes on affine derived schemes. Following \cite{BSY} Def.\@ 1 we will assume that for any differential non-positively graded $\mathbb C$-algebra $\dera{R}$ that we consider, the $\mathbb C$-algebra $\dera[0]{R}$ is finitely generated and smooth, and the underlying graded algebra $\dera[*]{R}$ is freely generated over $\dera[0]{R}$ by a finite sequence of finitely generated projective modules. Such algebras are cofibrant enough for the usual K\"ahler differentials to compute the correct cotangent complexes (e.g.\@ \cite{BSY} Prop.\@ 1 p.\@ 6). Affine derived schemes $\spec[\dera{R}]$ defined by such algebras will be called \emph{affine dg manifolds}.

In general a derived $\quot$-scheme is not an affine dg manifold because the degree $0$ component is only quasi-projective. We will use the name \emph{dg manifolds} to denote smooth quasi-projective schemes, equipped with extensions of the structure sheaves to sheaves of differential non-positively graded $\mathbb C$-algebras, s.t.\@ the underlying sheaves of graded algebras are freely generated by finite sequences of locally free coherent sheaves in negative degrees. Thus every affine dg manifold is a dg manifold and every dg manifold is Zariski locally an affine dg manifold. This notion is a slight variation of the one in \cite{DerQuot} Def.\@ 2.5.1 p.\@ 415, where the conditions of being quasi-projective in degree $0$ and having only a finite sequence of generating bundles were not imposed. As in \cite{BKS} we view each dg manifold as a derived scheme by breaking it into a simplicial diagram of affine derived schemes.

\smallskip

 As our intended objects of study are quotient stacks of $\quot$-schemes, we will follow Geometric Invariant Theory and assume that, when we consider a linear algebraic group $\algr$ acting on a derived scheme $\dersch$, e.g.\@ given by a dg manifold, there is an atlas on $\dersch$ consisting of $\algr$-invariant affine derived schemes. Thus locally we will have $\algr$ acting on some affine dg manifold $\spec[\dera{R}]$. 

\subsection{Shifted symplectic structures on quotient stacks}\label{twoone}

To fix the notation, we begin with recalling the notion of homotopically closed differential forms on affine derived schemes (\cite{PTVV13} Def.\@ 1.8 p.\@ 290). For a dg algebra $\dera{R}$ we have the cotangent complex $\derham{\dera{R}}$ concentrated in non-positive degrees. Denoting by $\suspense{\derham{\dera{R}}}$ the suspension of $\derham{\dera{R}}$, i.e.\@ $\suspense{\derham{\dera{R}}}:=\shift[-1]{\dera{R}}\underset{\dera{R}}\otimes\derham{\dera{R}}$, where $\shift[-1]{\dera{R}}$ is the free $\dera{R}$ module on a single generator in degree $-1$,\hide{\footnote{Notice that when we apply the differential from the left on $\suspense{\derham{\dera{R}}}$ it has to pass over the generator in degree $-1$, resulting in the differential on $\suspense{\derham{\dera{R}}}$ being $-1$ times the differential on $\derham{\dera{R}}$. This accounts for the natural sign trick producing a graded mixed complex, rather than a complex of complexes.}} we form the graded mixed algebra
\begin{equation}\label{derhamalgebra}\derham[\bullet]{\dera{R}}:=\lp\underset{i\in\noneg}\bigoplus\;\Sym[\dera{R}]{i}{\suspense{\derham{\dera{R}}}},\homdi,\dedi\rp,\end{equation}
where $\homdi$, $\dedi$ are the cohomological and de Rham differentials, having cohomological degrees $1,-1$ and weights $0,1$ respectively. Clearly $\derham[\bullet]{\dera{R}}$ is concentrated in non-positive degrees ($\dera{R}$ sits in degree $0$) and in non-negative weights (an $i$-form has weight $i$). To define homotopically closed $2$-forms on $\spec[\dera{R}]$ we need the negative cyclic complex of weight $2$ corresponding to $\derham[\bullet]{\dera{R}}$:
\begin{equation}\negacy{\bullet}\lp\derham[\bullet]{\dera{R}}\rp:=\lp\underset{j\in\mathbb Z}\bigoplus\;\negacy{j}\lp\derham[\bullet]{\dera{R}}\rp,\homdi+\para\dedi\rp,\quad\negacy{j}\lp\derham[\bullet]{\dera{R}}\rp:=\lf\underset{i\geq 0}\sum\para^i\nform{2+i}\rf,\end{equation}
where $\para$ is a formal parameter of degree $2$ and weight $-1$, $\nform{2+i}$ stands for a $2+i$-form of degree $j-2i$. All elements of $\negacy{\bullet}$ have the same weight $2$, i.e.\@ this complex has only one grading (cohomological). 

\begin{remark} We should note that when we say $\nform{i+2}$ has degree $j-2i$ we mean the cohomological degree within $\derham[\bullet]{\dera{R}}$, which involves the suspension, i.e.\@ shifting down by the weight of the form. For example in our notation a $2$-form of degree $j$ defines a morphism of degree $j+2$ from the tangent complex to the cotangent complex on $\spec[\dera{R}]$.\end{remark}

For any $\shi\in\mathbb Z$, truncating $\suspense[\shi-2]{\negacy{\bullet}\lp\derham[\bullet]{\dera{R}}\rp}=\shift[2-\shi]{\mathbb C}\underset{\mathbb C}\otimes\negacy{\bullet}\lp\derham[\bullet]{\dera{R}}\rp$ to non-positive degrees\footnote{Recall that truncating a cochain complex to some degree and below means keeping only the cocycles in that degree and everything of smaller degree.} we obtain a cochain complex $\hoclo{\shi}\lp\spec[\dera{R}]\rp$ in the category $\vect$ of vector spaces over $\mathbb C$. This cochain complex is contravariantly functorial in $\spec[\dera{R}]$, and defines a stack -- \emph{the stack of homotopically closed $2$-forms of degree $\shi$} -- on the site $\saff$ of affine derived schemes (\cite{PTVV13} Def.\@ 1.8, p.\@ 290, Prop.\@ 1.11 p.\@ 291). 

\begin{remark}\label{doldkan} Note that $\hoclo{\shi}$ is a (homotopy) sheaf of simplicial sets on the site of affine derived schemes, that is obtained by applying objectwise Dold--Kan correspondence to a sheaf of non-positively graded cochain complexes in $\vect$, i.e.\@ $\hoclo{\shi}$ is a sheaf of simplicial objects in $\vect$.\end{remark}

\emph{A homotopically closed $2$-form of degree $\shi$} on $\spec[\dera{R}]$ is defined as a $0$-simplex in $\hoclo{\shi}\lp\spec[\dera{R}]\rp$. As the truncation to non-positive degrees involves taking the cocycles in degree $0$, to choose such a simplex is the same as to choose a $\homdi+\para\dedi$-cocycle of degree $\shi-2$ in $\negacy{\bullet}\lp\derham[\bullet]{\dera{R}}\rp$. One should notice that the definition of $\hoclo{\shi}$ above depends on our choice of presentations of cotangent complexes on affine derived schemes, hence, in general, $\hoclo{\shi}$ is only defined up to a global (i.e.\@ an objectwise) weak equivalence. Therefore a $0$-simplex as above is usually given only up to a homotopy provided by $1$-simplices, i.e.\@ up to a $\homdi+\para\dedi$-coboundary.

\smallskip

To define a homotopically closed $2$-form of degree $\shi$ on some stack $\astack$ on $\saff$ means to give (a homotopy class of) a morphism of stacks $\astack\rightarrow\hoclo{\shi}$ (\cite{PTVV13} Def.\@ 1.12 p.\@ 292). We have an entire simplicial set $\map{\astack}{\hoclo{\shi}}$ of such morphisms and we would like to compute it in the particular case of $\astack$ being the \emph{quotient stack} $\quack{\spec[\dera{R}]}{\algr}$ for an action of a linear algebraic group $\algr$ on an affine dg manifold $\spec[\dera{R}]$. Using the action  and the group structure on $\algr$ we construct a simplicial diagram of affine dg manifolds $\duack{\spec[\dera{R}]}{\algr}:=\lf\spec[\dera{R}]\times\lp\algr^{\times^j}\rp\rf_{j\in\noneg}$ and then define
	\begin{equation}\quack{\spec[\dera{R}]}{\algr}:=\hocolim\duack{\spec[\dera{R}]}{\algr}\end{equation}
computed\hide{\footnote{This homotopy colimit can be computed by taking the realization of the sheaf of bisimplicial sets (e.g.\@ \cite{Hirsh} Def.\@ 18.6.2 p.\@ 395, Thm.\@ 18.7.4 p.\@ 397), i.e.\@ by taking the diagonals (e.g.\@ \cite{GJ} Ex.\@ 1.4 p.\@ 198).}} in the category of stacks, where we view each $\spec[\dera{R}]\times\lp\algr^{\times^j}\rp$ as a stack
	\begin{equation}\label{yoneda}\forall\spec[\dera{Q}]\in\saff\quad\spec[\dera{Q}]\longmapsto\map[\saff]{\spec[\dera{Q}]}{\spec[\dera{R}]\times\lp\algr^{\times^j}\rp}.\end{equation} 
Since construction of (\ref{yoneda}) involves taking the fibrant resolutions of derived affine schemes $\lf\spec[\dera{R}]\times\lp\algr^{\times^j}\rp\rf_{j\in\noneg}$ in $\saff$, we can assume that the simplicial diagram $\duack{\spec[\dera{R}]}{\algr}$  is objectwise cofibrant in a local model structure on the category of stacks. Therefore we have (e.g.\@ \cite{Hirsh} Thm.\@ 19.4.4 p.\@ 415):
	\begin{equation}\label{holim}\map{\quack{\spec[\dera{R}]}{\algr}}{\hoclo{\shi}}\simeq\holim[j\in\noneg]\map{\spec[\dera{R}]\times\lp\algr^{\times^j}\rp}{\hoclo{\shi}}\simeq\end{equation}
	\begin{equation*}\simeq\holim[j\in\noneg]\hoclo{\shi}\lp\spec[\dera{R}]\times\lp\algr^{\times^j}\rp\rp.\end{equation*}
Fibrant resolutions of $\spec[\dera{R}]$ and $\algr$ correspond to cofibrant resolutions of $\dera{R}$ and of the ring of functions on $\algr$, however, since it is $\hoclo{\shi}$ that we evaluate on $\spec[\dera{R}]\times\lp\algr^{\times^j}\rp$, here it is enough to require that K\"ahler differentials compute the correct cotangent complexes on $\algr$ and on $\spec[\dera{R}]$, i.e.\@ the assumption that $\spec[\dera{R}]$ is an affine dg manifold is sufficient. Thus we have the following

\begin{proposition} For any linear algebraic group $\algr$ acting on  $\spec[\dera{R}]$, and for any $\shi\in\mathbb Z$ the simplicial set of homotopically closed $2$-forms of degree $\shi$ on $\quack{\spec[\dera{R}]}{\algr}$ is weakly equivalent to a homotopy limit of the cosimplicial diagram $\lf\hoclo{\shi}\lp\spec[\dera{R}]\times\lp\algr^{\times^j}\rp\rp\rf_{j\in\noneg}$ of simplicial sets.\end{proposition}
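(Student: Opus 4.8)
The plan is to assemble the statement directly from the chain of equivalences displayed in (\ref{holim}), reading them as a formal consequence of the definition of the quotient stack as a homotopy colimit. By definition (\cite{PTVV13} Def.\@ 1.12) the simplicial set of homotopically closed $2$-forms of degree $\shi$ on $\quack{\spec[\dera{R}]}{\algr}$ is the mapping space $\map{\quack{\spec[\dera{R}]}{\algr}}{\hoclo{\shi}}$, and since $\quack{\spec[\dera{R}]}{\algr}=\hocolim\duack{\spec[\dera{R}]}{\algr}$ by construction, the entire proof amounts to commuting this mapping space past the homotopy colimit indexed by $j\in\noneg$.

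First I would use that mapping out of a homotopy colimit converts it into a homotopy limit of the individual mapping spaces, giving $\map{\quack{\spec[\dera{R}]}{\algr}}{\hoclo{\shi}}\simeq\holim[j\in\noneg]\map{\spec[\dera{R}]\times\lp\algr^{\times^j}\rp}{\hoclo{\shi}}$, which is the first equivalence in (\ref{holim}) (e.g.\@ \cite{Hirsh} Thm.\@ 19.4.4). To invoke this I must arrange that the simplicial diagram $\duack{\spec[\dera{R}]}{\algr}$ is objectwise cofibrant in the local model structure on stacks; as noted above, this is achieved by presenting each term $\spec[\dera{R}]\times\lp\algr^{\times^j}\rp$ through its fibrant resolution in $\saff$ and the associated representable functor (\ref{yoneda}). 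Since the indexing category is $\noneg$, the homotopy colimit is the realization of the bar construction and the resulting expression is a homotopy limit of a cosimplicial diagram, exactly as demanded in the statement.

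Next I would identify each term $\map{\spec[\dera{R}]\times\lp\algr^{\times^j}\rp}{\hoclo{\shi}}$ with the value $\hoclo{\shi}\lp\spec[\dera{R}]\times\lp\algr^{\times^j}\rp\rp$. This is the (derived) Yoneda lemma: each $\spec[\dera{R}]\times\lp\algr^{\times^j}\rp$ is a representable object of $\saff$ and $\hoclo{\shi}$ is a stack, so mapping out of the representable recovers the objectwise value, yielding the second equivalence in (\ref{holim}). The point that makes this evaluation compute the \emph{correct} homotopy type is the hypothesis that $\spec[\dera{R}]$ is an affine dg manifold: together with smoothness of $\algr$, this ensures that K\"ahler differentials compute the correct cotangent complexes on every product $\spec[\dera{R}]\times\lp\algr^{\times^j}\rp$, so no further cofibrant replacement of $\dera{R}$ is needed when these schemes are fed into $\hoclo{\shi}$.

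The main obstacle I anticipate is bookkeeping the homotopy coherence rather than any single hard computation. Because $\hoclo{\shi}$ is itself only defined up to objectwise weak equivalence — it depends on the chosen presentations of the cotangent complexes — I must ensure that the two model-categorical inputs hold simultaneously: the diagram $\duack{\spec[\dera{R}]}{\algr}$ objectwise cofibrant, so that the colimit--limit passage of the first step is homotopy-meaningful, and $\hoclo{\shi}$ fibrant as a homotopy sheaf, so that its objectwise evaluation in the second step is homotopy-invariant (cf.\@ Remark \ref{doldkan}). Once these compatibility conditions are in place, the equivalences of (\ref{holim}) splice together to give exactly the asserted weak equivalence between the simplicial set of homotopically closed $2$-forms on $\quack{\spec[\dera{R}]}{\algr}$ and $\holim[j\in\noneg]\hoclo{\shi}\lp\spec[\dera{R}]\times\lp\algr^{\times^j}\rp\rp$.
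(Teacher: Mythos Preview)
Your proposal is correct and follows essentially the same approach as the paper: the proposition is stated there as an immediate summary (``Thus we have the following'') of the chain of equivalences (\ref{holim}), obtained by writing $\quack{\spec[\dera{R}]}{\algr}$ as a homotopy colimit of an objectwise cofibrant diagram, applying \cite{Hirsh} Thm.\@ 19.4.4 to convert the mapping space into a homotopy limit, and then identifying each $\map{\spec[\dera{R}]\times\lp\algr^{\times^j}\rp}{\hoclo{\shi}}$ with $\hoclo{\shi}\lp\spec[\dera{R}]\times\lp\algr^{\times^j}\rp\rp$ using that the affine dg manifold hypothesis makes K\"ahler differentials compute the correct cotangent complexes. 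Your additional remarks on homotopy coherence and fibrancy of $\hoclo{\shi}$ are sound but go slightly beyond what the paper spells out.
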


Assuming that a cosimplicial diagram in a simplicial model category is fibrant in the Reedy model structure, one way to compute its homotopy limit is by taking the total space (e.g.\@ \cite{Hirsh} Thm.\@ 18.7.4 p.\@ 397). In turn (e.g.\@ \cite{GJ} p.\@ 389) computing the total space of $\lf\hoclo{\shi}\lp\spec[\dera{R}]\times\lp\algr^{\times^j}\rp\rp\rf_{j\in\noneg}$ is equivalent to computing the mapping space
	\begin{equation}\label{totmap}\map[\coss]{\stasi}{\lf\hoclo{\shi}\lp\spec[\dera{R}]\times\lp\algr^{\times^j}\rp\rp\rf_{j\in\noneg}},\end{equation}
where $\coss$ is the category of cosimplicial diagrams in the category $\sset$ of simplicial sets, and $\stasi=\lf\stasi[j]\rf_{j\in\noneg}$ is the cosimplicial diagram of the standard simplices in $\sset$. Here Rem.\@ \ref{doldkan} becomes useful. Denoting by $\coss[\mathbb C]$ the category of cosimplicial diagrams in the category $\sset[\mathbb C]$ of simplicial objects in $\vect$ and by $\stasic\in\coss[\mathbb C]$ the $\mathbb C$-linearization of $\stasi$ we can use the adjunction $\coss\leftrightarrows\coss[\mathbb C]$ to rewrite (\ref{totmap}) as
	\begin{equation}\label{totmac}\map[{\coss[\mathbb C]}]{\stasic}{\lf\hoclo{\shi}\lp\spec[\dera{R}]\times\lp\algr^{\times^j}\rp\rp\rf_{j\in\noneg}}.\end{equation}
The simplicial normalization functor (e.g.\@ \cite{DK4} \S3.1) gives an equivalence between $\sset[\mathbb C]$ and the category $\dgcon[\mathbb C]$ of non-positively graded cochain complexes in $\vect$. The latter is an abelian category, and composing with the cosimplicial normalization functor (e.g.\@ \cite{DK4} \S3.2) we have an equivalence
	\begin{equation}\cnorm\colon\coss[\mathbb C]\overset{\cong}\longrightarrow\dgcop[{\dgcon[\mathbb C]}]\cong\dgcon[{\dgcop[\mathbb C]}],\end{equation}
where $\dgcop$ denotes non-negatively graded cochain complexes. As $\dgcop[\mathbb C]$ is abelian there is the standard projective model structure on $\dgcon[{\dgcop[\mathbb C]}]$.\footnote{Weak equivalences are the \emph{row-wise} quasi-isomorphisms.} We have the following straightforward proposition
\begin{proposition}\label{cosimplicialnorm} The functor $\cnorm$ is a left Quillen functor with respect to the Reedy model structure on $\coss[\mathbb C]$ and the projective model structure on $\dgcon[{\dgcop[\mathbb C]}]$.\end{proposition}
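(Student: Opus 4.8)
The plan is to exploit that $\cnorm$ is a composite of two normalizations, each an equivalence of categories, and to treat them separately. Write $\snorm_*$ for the levelwise simplicial normalization, obtained by applying the Dold--Kan equivalence $\snorm\colon\sset[\mathbb C]\to\dgcon[\mathbb C]$ in every cosimplicial degree, and let $\nu$ denote the cosimplicial normalization, so that $\cnorm=\nu\circ\snorm_*$. Over the field $\mathbb C$ the equivalence $\snorm$ identifies the standard model structure on $\sset[\mathbb C]$ with the projective one on $\dgcon[\mathbb C]$: weak equivalences are the quasi-isomorphisms, cofibrations are the monomorphisms, and every object is cofibrant. Consequently $\snorm_*$ is an isomorphism of the induced Reedy model structures, hence in particular left Quillen, and the proposition reduces to showing that the cosimplicial normalization $\nu$, from the Reedy model structure on cosimplicial objects of $\dgcon[\mathbb C]$ to the projective model structure on $\dgcop[{\dgcon[\mathbb C]}]$, is left Quillen.

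Since $\nu$ is itself an equivalence of abelian categories, being part of the cosimplicial Dold--Kan correspondence, it suffices to check that it preserves cofibrations and weak equivalences; preservation of trivial cofibrations is then automatic. The weak equivalences are the easy case. A map is a Reedy weak equivalence iff it is a levelwise weak equivalence, i.e.\@ a quasi-isomorphism in $\dgcon[\mathbb C]$ in each cosimplicial degree. The natural Dold--Kan splitting $Z^n\cong\bigoplus_{[n]\twoheadrightarrow[k]}\nu^k Z$, a finite direct sum over surjections in $\Delta$, expresses each level as a sum of normalized terms; since $\nu$ is exact and a finite direct sum of maps of complexes is a quasi-isomorphism iff each summand is, an induction on $n$ shows that $f$ is a levelwise quasi-isomorphism iff each normalized component $\nu^n f$ is a quasi-isomorphism, i.e.\@ iff $\nu(f)$ is a row-wise quasi-isomorphism. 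Hence $\nu$ preserves and reflects weak equivalences.

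The genuine content lies in the cofibrations. Because $\nu$ preserves all colimits and both model structures are cofibrantly generated, it is enough to show that $\nu$ sends a generating set of Reedy cofibrations to projective cofibrations. These generators are the relative latching inclusions attached to the generating cofibrations of $\dgcon[\mathbb C]$; so the task is to normalize them and to recognise the outcome as a generating projective cofibration -- a disk-to-sphere type inclusion -- in $\dgcop[{\dgcon[\mathbb C]}]$. I expect this identification to be the main obstacle, the delicate point being the interplay of the two cosimplicial structures at work: the Reedy latching objects are governed by the coface maps (the monomorphisms of $\Delta$), whereas $\nu$ is built from the codegeneracies, so one must track with care how normalization acts on the latching data. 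Here working over the field $\mathbb C$ is exactly what makes the bookkeeping tractable -- every object of $\dgcon[\mathbb C]$ is cofibrant, cofibrations are just monomorphisms, and the latching pushouts are computed in, and split over, the abelian category. Once $\nu$ is seen to preserve cofibrations as well as weak equivalences, it preserves trivial cofibrations and is left Quillen, and, composed with $\snorm_*$, so is $\cnorm$, which completes the proof.
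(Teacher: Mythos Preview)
Your handling of weak equivalences is essentially the paper's: both use the direct-sum decomposition into normalized and degenerate (``bulk'') parts to see that levelwise quasi-isomorphisms are preserved. The problem is the cofibration half. You set up an approach via generating Reedy cofibrations, then write ``I expect this identification to be the main obstacle'' and stop. As written this is a plan, not a proof --- the step you yourself flag as delicate is never carried out.

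The paper avoids this detour entirely. It uses only the soft fact that a Reedy cofibration is in particular a levelwise cofibration (Hirschhorn, Prop.\@ 15.3.11), hence after $\snorm_*$ a cosimplicial diagram of monomorphisms in $\dgcon[\mathbb C]$; then it observes that cosimplicial normalization, being passage to an intersection of kernels of codegeneracies, sends subobjects to subobjects and thus preserves injectivity. Over $\mathbb C$ the cofibrations in the target model structure are the monomorphisms, so the argument is finished. You actually recorded the key ingredients yourself --- that cofibrations in $\dgcon[\mathbb C]$ are monomorphisms and that everything splits over the base field --- but by reaching for latching objects and generating sets you turned a one-line observation into a bookkeeping exercise you could not close. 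The direct argument needs neither a description of the generating Reedy cofibrations nor any analysis of how $\nu$ interacts with latching data.
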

\begin{proof} First we show that $\cnorm$ preserves cofibrations. In the Reedy model structure on $\coss[\mathbb C]$ each cofibration is in particular a cofibration in each cosimplicial dimension (e.g.\@ \cite{Hirsh} Prop.\@ 15.3.1 p.\@ 291). Since the normalization functor in the simplicial Dold--Kan correspondence is left Quillen, it follows that any cofibration in $\coss[\mathbb C]$ is mapped to a cosimplicial diagram of cofibrations in $\dgcon[\mathbb C]$, i.e.\@ to a cosimplicial diagram of injective maps. Then the cosimplicial normalization functor extracts subcomplexes in each cosimplicial dimension (e.g.\@ \cite{DK4} \S3.2), hence preserving injectivity. 
	
To prove that $\cnorm$ preserves weak equivalences we need to show that cosimplicial normalization maps cosimplicial diagrams of weak equivalences in $\dgcon[\mathbb C]$ to weak equivalences in $\dgcon[{\dgcop[\mathbb C]}]$. We note that the normalized complexes extracted by cosimplicial normalization have canonical complements -- the bulk complexes (loc.\@ cit.\@). Since cohomology of a direct sum of two complexes is a direct sum of cohomologies, we conclude that $\cnorm$ preserves weak equivalences.\end{proof}

\begin{remark} An immediate consequence of Prop.\@ \ref{cosimplicialnorm} is that every object in $\coss[\mathbb C]$ is fibrant, and hence every cosimplicial diagram of simplicial objects in $\vect$ is fibrant in the Reedy model structure on $\coss$. Therefore (\ref{totmap}) does compute the homotopy limit in (\ref{holim}).\end{remark}

Let $\ddcom[\mathbb C]{\bullet}{\bullet}$ be the category of (unbounded) double cochain complexes in $\vect$.\footnote{The $(i,j)$ component is in the $j$-th row and the $i$-th column.} There is a standard functor $\sig\colon\dgcon[{\dgcop[\mathbb C]}]\rightarrow\ddcom[\mathbb C]{\bullet}{\bullet}$,\hide{\footnote{This functor involves ``the sign trick'' (e.g.\@ \cite{Wei} p.\@ 8) multiplying the horizontal differential in row $j$ by $(-1)^j$.}} whose image is the full subcategory of second quadrant complexes. Let $\grmix[\mathbb C]{\bullet}{\bullet}$ be the category of (unbounded) graded mixed complexes equipped with the projective model structure (\cite{PTVV13} \S1.1). In our notation the $(i,j)$ component of $\grmix[\mathbb C]{\bullet}{\bullet}$  is of \emph{mixed degree} $i$ and \emph{weight} $j$. There is the obvious equivalence $\mixing\colon\ddcom[\mathbb C]{\bullet}{\bullet}\overset\cong\longrightarrow\grmix[\mathbb C]{\bullet}{\bullet}$ which renames an $(i,j)$-component into an $(i-j,j)$-component. Composing $\cnorm$, $\sig$ and $\mixing$ we obtain
	\begin{equation}\cmorm\colon\coss[\mathbb C]\longrightarrow\grmix[\mathbb C]{\bullet}{\bullet}.\end{equation}
\begin{proposition} The functor $\cmorm$ is a left Quillen functor with respect to the Reedy model structure on $\coss[\mathbb C]$ and the projective model structure on $\grmix[\mathbb C]{\bullet}{\bullet}$. This functor realizes $\coss[\mathbb C]$ as a full co-reflective subcategory of $\grmix[\mathbb C]{\bullet}{\bullet}$ consisting of objects whose components are $0$ unless they satisfy:
	\begin{equation}{\rm weight}\geq 0,\quad{\rm mixed\; degree}+{\rm weight}\leq 0.\end{equation}
\end{proposition}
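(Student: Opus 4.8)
The plan is to build the functor $\cmorm$ as a composite of three functors $\cnorm$, $\sig$, $\mixing$ and analyze each in turn, leaning on the fact that the last statement essentially packages Prop.~\ref{cosimplicialnorm} together with two bookkeeping equivalences. First I would observe that $\sig\colon\dgcon[{\dgcop[\mathbb C]}]\rightarrow\ddcom[\mathbb C]{\bullet}{\bullet}$ is fully faithful by construction, with essential image exactly the second-quadrant double complexes (those concentrated in rows $j\geq 0$ and columns $i\leq 0$, i.e.\@ the image of a non-positively-graded complex of non-negatively-graded complexes under the sign trick). Composing with the relabelling equivalence $\mixing$ translates the conditions $i\leq 0$, $j\geq 0$ into the conditions $(i-j)+j=i\leq 0$ rewritten in mixed-degree/weight coordinates. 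Since $\mixing$ renames an $(i,j)$-component to an $(i-j,j)$-component, a nonzero component must have weight $j\geq 0$ and mixed degree $i-j$ satisfying $(i-j)+j=i\leq 0$; this is precisely the pair of inequalities in the statement, so the essential image description follows immediately.

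Next I would verify the Quillen property. Prop.~\ref{cosimplicialnorm} already gives that $\cnorm$ is left Quillen from the Reedy structure on $\coss[\mathbb C]$ to the projective structure on $\dgcon[{\dgcop[\mathbb C]}]$. Both $\sig$ and $\mixing$ are equivalences of categories: $\mixing$ is an isomorphism that merely relabels indices, and $\sig$ is an isomorphism onto its (full) image implemented by multiplying horizontal differentials in row $j$ by $(-1)^j$. Under the projective model structures on all three categories, weak equivalences and fibrations are defined row-wise (equivalently, component-wise after the relabelling), and the sign trick manifestly preserves these since it is a degree-wise isomorphism commuting with the relevant differentials up to sign. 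Hence $\sig$ and $\mixing$ are left Quillen (indeed Quillen equivalences onto the appropriate subcategories), and a composite of left Quillen functors is left Quillen. The co-reflectivity claim then amounts to producing a right adjoint to $\cmorm$: I would construct it as the composite of the inverse equivalences $\mixing^{-1}$, $\sig^{-1}$ (on the image) together with the right adjoint $\derquz$ to $\cnorm$ coming from the cosimplicial Dold--Kan equivalence, the point being that $\cnorm$ is already an equivalence onto $\dgcon[{\dgcop[\mathbb C]}]$ so its "right adjoint" is its inverse.

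I expect the only genuinely delicate point to be the precise matching of the model-structural conventions across the three categories: one must check that the projective model structure on $\grmix[\mathbb C]{\bullet}{\bullet}$ from \cite{PTVV13} corresponds, under $\mixing$ and $\sig$, to the row-wise projective structure on $\dgcon[{\dgcop[\mathbb C]}]$ used in Prop.~\ref{cosimplicialnorm}, so that "left Quillen" is asserted with respect to compatible structures. Since $\mixing$ and $\sig$ are equivalences carrying weak equivalences to weak equivalences and cofibrations to cofibrations (both classes being detected component-wise and unaffected by the sign trick or the index relabelling), this compatibility is routine but must be stated. The remaining work—identifying the essential image with the two explicit inequalities—is pure index bookkeeping and requires no further input beyond tracking the $(i,j)\mapsto(i-j,j)$ substitution through the second-quadrant constraint.
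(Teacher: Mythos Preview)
Your essential-image computation and the reduction to Prop.~\ref{cosimplicialnorm} are correct, but there is a genuine gap in the Quillen and co-reflectivity parts. You treat $\sig$ as an equivalence and build the right adjoint to $\cmorm$ only ``on the image'': you propose $\mixing^{-1}$ followed by ``$\sig^{-1}$ (on the image)'' followed by $\cnorm^{-1}$. That is not a right adjoint to $\cmorm$ as a functor into the full category $\grmix[\mathbb C]{\bullet}{\bullet}$; it is only an inverse on the essential image. Co-reflectivity means precisely that the inclusion into the \emph{whole} of $\grmix[\mathbb C]{\bullet}{\bullet}$ has a right adjoint, and ``left Quillen'' requires by definition that the functor be a left adjoint. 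So as written you have established neither claim: you have only shown that $\cmorm$ preserves cofibrations and weak equivalences, which is not enough.

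The paper's proof addresses exactly this point. It does not treat $\sig$ as an equivalence; instead it exhibits the right adjoint of $\sig\colon\dgcon[{\dgcop[\mathbb C]}]\rightarrow\ddcom[\mathbb C]{\bullet}{\bullet}$ explicitly as truncation to the second quadrant (erase everything of negative vertical degree, then truncate to non-positive horizontal degrees), and checks that this truncation is a right Quillen functor: it preserves row-wise quasi-isomorphisms, and it takes surjections to maps surjective in negative horizontal degrees, hence to fibrations in the projective structure. Composed with $\mixing^{-1}$ this gives the co-reflector on all of $\grmix[\mathbb C]{\bullet}{\bullet}$, and then the left Quillen property of $\cmorm=\mixing\circ\sig\circ\cnorm$ follows. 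Your argument becomes correct once you replace ``$\sig^{-1}$ on the image'' with this truncation and verify its right-Quillen properties; the remaining bookkeeping (the inequalities on weight and mixed degree, and the compatibility of model structures under $\mixing$) is fine as you wrote it.
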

\begin{proof} Let $\dgcou[{\dgcou[\mathbb C]{\bullet}}]{\bullet}$ be the category of unbounded cochain complexes in the category of unbounded cochain complexes in $\vect$. As $\dgcou[\mathbb C]{\bullet}$ is abelian we have the projective model structure on $\dgcou[{\dgcou[\mathbb C]{\bullet}}]{\bullet}$, that we transfer over the equivalence $\dgcou[{\dgcou[\mathbb C]{\bullet}}]{\bullet}\cong\ddcom[\mathbb C]{\bullet}{\bullet}$. It is clear that $\mixing$ identifies this model structure with the one on $\grmix[\mathbb C]{\bullet}{\bullet}$. According to Prop.\@ \ref{cosimplicialnorm} $\cnorm$ is a left Quillen functor, so we need to show that $\sig$ is also a left Quillen functor. 
	
The right adjoint of $\sig$ is the truncation to the second quadrant, i.e.\@ it consists of erasing everything of negative vertical degree and then truncating to the non-positive horizontal degrees. Clearly this operation preserves row-wise quasi-isomorphisms and it maps surjections to morphisms that are surjective in negative horizontal degrees, i.e.\@ the right adjoint is a right Quillen functor.\end{proof}

\begin{corollary}\label{simplicialadjunction} For any $S,T\in\coss[\mathbb C]$ we have
		\begin{equation}\map[{\coss[\mathbb C]}]{S}{T}\simeq\map[{\grmix[\mathbb C]{\bullet}{\bullet}}]{\cmorm\lp S\rp}{\cmorm\lp T\rp}.\end{equation}
\end{corollary}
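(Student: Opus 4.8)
The plan is to deduce the statement directly from the structure of $\cmorm$ recorded in the preceding proposition, namely that $\cmorm$ is a left Quillen functor realizing $\coss[\mathbb C]$ as a \emph{full co-reflective} subcategory of $\grmix[\mathbb C]{\bullet}{\bullet}$. Co-reflectivity means exactly that the inclusion $\cmorm$ admits a right adjoint $R$ (the co-reflector), so that $\cmorm\dashv R$ is a Quillen adjunction; and full faithfulness of $\cmorm$ forces the unit $\id\to R\cmorm$ to be a natural isomorphism, whence $R\cmorm\cong\id$ on $\coss[\mathbb C]$. The comparison we want is then nothing more than the compatibility of a Quillen adjunction with homotopy function complexes (e.g.\ \cite{Hirsh}): for $A$ cofibrant in $\coss[\mathbb C]$ and $B$ fibrant in $\grmix[\mathbb C]{\bullet}{\bullet}$ there is a natural weak equivalence $\map[{\grmix[\mathbb C]{\bullet}{\bullet}}]{\cmorm(A)}{B}\simeq\map[{\coss[\mathbb C]}]{A}{R(B)}$, which I intend to apply with $A$ a cofibrant model of $S$ and with $B=\cmorm(T)$.

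Before invoking that formula I would dispose of the two $(\text{co})$fibrancy conditions. On the target side the model structure on $\grmix[\mathbb C]{\bullet}{\bullet}$ is the projective one over the field $\mathbb C$, whose fibrations are the row-wise surjections -- this is precisely what made the truncation right adjoint of $\sig$ right Quillen in the previous proof -- so every object is fibrant and in particular $\cmorm(T)$ is fibrant. On the source side, choose a cofibrant replacement $q\colon QS\xrightarrow{\ \sim\ }S$ in $\coss[\mathbb C]$. Two remarks make this harmless: every object of $\coss[\mathbb C]$ is fibrant (see the remark following Prop.~\ref{cosimplicialnorm}), and $\cmorm$ preserves \emph{all} weak equivalences, since $\cnorm$ does by the proof of Prop.~\ref{cosimplicialnorm} while $\sig$ and $\mixing$ do as well ($\mixing$ is an equivalence of categories and $\sig$ is a fully faithful embedding preserving row-wise quasi-isomorphisms). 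Hence both $q$ and $\cmorm(q)$ induce weak equivalences of homotopy function complexes in the relevant variable.

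Assembling these pieces, the computation reads
\begin{align*}
\map[{\grmix[\mathbb C]{\bullet}{\bullet}}]{\cmorm(S)}{\cmorm(T)}&\simeq\map[{\grmix[\mathbb C]{\bullet}{\bullet}}]{\cmorm(QS)}{\cmorm(T)}\simeq\map[{\coss[\mathbb C]}]{QS}{R\cmorm(T)}\\
&\cong\map[{\coss[\mathbb C]}]{QS}{T}\simeq\map[{\coss[\mathbb C]}]{S}{T},
\end{align*}
where the first and last equivalences come from $\cmorm(q)$ and $q$ respectively (using fibrancy of the targets), the middle equivalence is the Quillen adjunction formula applied to the cofibrant object $QS$ and the fibrant object $\cmorm(T)$, and the isomorphism uses $R\cmorm\cong\id$. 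The only genuinely delicate point is replacing the derived right adjoint by the point-set identity $R\cmorm(T)=T$: this is legitimate precisely because $\cmorm(T)$ is fibrant, so that the underived $R$ already computes its derived value on it. I expect this fibrancy bookkeeping -- rather than any substantial homotopical input -- to be the main thing to get right; everything else is the formal machinery of Quillen adjunctions together with the full faithfulness and weak-equivalence-preservation of $\cmorm$ that are already in hand.
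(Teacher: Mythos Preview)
Your argument is correct and follows essentially the same route as the paper's own proof: both deduce the equivalence of mapping spaces from the Quillen adjunction $\cmorm\dashv R$ together with full faithfulness of $\cmorm$ (so that $R\cmorm\cong\id$), the paper using a cosimplicial resolution $\widetilde S\to S$ and the identification $\hom_{\coss[\mathbb C]}(\widetilde S,T)\cong\hom_{\grmix[\mathbb C]{\bullet}{\bullet}}(\cmorm(\widetilde S),T')$ via \cite{Hirsh} Prop.\@ 17.4.16, while you package the same content as the derived-adjunction formula applied after a cofibrant replacement of $S$. Your explicit observation that every object of $\grmix[\mathbb C]{\bullet}{\bullet}$ is fibrant (fibrations being weight-wise surjections in the projective structure of \cite{PTVV13} \S1.1) is a small but genuine simplification over the paper, which takes a fibrant replacement $T'$ of $\cmorm(T)$ that is in fact unnecessary.
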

\hide{\begin{proof} We can assume that $T$ is the image of a fibrant replacement $T'$ of $\cmorm\lp T\rp$ in $\grmix[\mathbb C]{\bullet}{\bullet}$. Then to compute the mapping space in $\coss[\mathbb C]$ we can choose a cosimplicial resolution $\widetilde{S}\rightarrow S$ (e.g.\@ \cite{Hirsh} Def.\@ 16.1.2 p.\@ 318) and take the resulting simplicial set $\hom_{\coss[\mathbb C]}\lp\widetilde{S},T\rp$ (e.g.\@ \cite{Hirsh} Def.\@ 17.1.1 p.\@ 349). Now we have $\hom_{\coss[\mathbb C]}\lp\widetilde{S},T\rp\cong\hom_{\grmix[\mathbb C]{\bullet}{\bullet}}\lp\cmorm\lp\widetilde{S}\rp,T'\rp\simeq\map[{\grmix[\mathbb C]{\bullet}{\bullet}}]{\cmorm\lp S\rp}{\cmorm\lp T\rp}$ (e.g.\@ \cite{Hirsh} Prop.\@ 17.4.16 p.\@ 357).\end{proof}}

Having the Corollary above we would like to understand the graded mixed complexes
	\begin{equation}\cmorm\lp\lf\hoclo{d}\lp\spec[\dera{R}]\times\lp\algr^{\times^j}\rp\rp\rf_{j\in\noneg}\rp,\quad\cmorm\lp\stasic\rp.\end{equation}
It is easy to describe the former: in weight $j\in\noneg$ it consists of elements of $\hoclo{d}\lp\spec[\dera{R}]\times\lp\algr^{\times^j}\rp\rp$ that pull back to $0$ over any degeneration map $\spec[\dera{R}]\times\lp\algr^{\times^{j-1}}\rp\hookrightarrow\spec[\dera{R}]\times\lp\algr^{\times^j}\rp$. 
\begin{proposition} The graded mixed complex $\cmorm\lp\stasic\rp$ is a cofibrant replacement of $\mathbb C$ considered as a graded mixed complex concentrated in degree $0$ and weight $0$.\end{proposition}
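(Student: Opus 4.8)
The plan is to realize $\cmorm\lp\stasic\rp$ as a cofibrant object weakly equivalent to $\mathbb C$, by pushing the augmentation of the cosimplicial standard simplex through the left Quillen functor $\cmorm$. The terminal object $\ast$ of $\coss$ is the constant diagram on the one-point simplicial set, and the unique maps $\stasi[j]\rightarrow\ast$ assemble into an augmentation $\stasi\rightarrow\ast$. Linearizing yields a morphism $\stasic\rightarrow\underline{\mathbb C}$ in $\coss[\mathbb C]$, where $\underline{\mathbb C}$ is the constant cosimplicial diagram on the constant simplicial object $\mathbb C$. I would then apply $\cmorm$ to this morphism and verify two things: that $\cmorm\lp\stasic\rp$ is cofibrant, and that $\cmorm$ carries $\stasic\rightarrow\underline{\mathbb C}$ to a weak equivalence whose target is $\mathbb C$ in mixed degree $0$ and weight $0$.

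First I would treat the weak equivalence. The map $\stasic\rightarrow\underline{\mathbb C}$ is a Reedy weak equivalence since it is one in each cosimplicial dimension: in dimension $j$ the map $\mathbb C\lp\stasi[j]\rp\rightarrow\mathbb C$ induced by $\stasi[j]\rightarrow\ast$ is a weak equivalence because $\stasi[j]$ is contractible, its simplicial normalization computing the homology of $\stasi[j]$, which is $\mathbb C$ concentrated in degree $0$. Since $\cnorm$ preserves weak equivalences (the argument for Prop.\@ \ref{cosimplicialnorm}), and $\sig$ (a reindexing with signs) and $\mixing$ (an equivalence of categories) do as well, $\cmorm$ preserves weak equivalences; hence $\cmorm\lp\stasic\rp\rightarrow\cmorm\lp\underline{\mathbb C}\rp$ is a weak equivalence. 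To identify the target I would trace $\underline{\mathbb C}$ through the three functors: the constant simplicial object $\mathbb C$ normalizes to $\mathbb C$ in vertical degree $0$, the constant cosimplicial diagram normalizes to $\mathbb C$ in horizontal degree $0$, and $\sig$, $\mixing$ both fix the origin, so $\cmorm\lp\underline{\mathbb C}\rp$ is $\mathbb C$ placed in mixed degree $0$ and weight $0$.

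Next I would establish cofibrancy. As $\cmorm$ is left Quillen, it preserves cofibrant objects, so it suffices to show $\stasic$ is Reedy cofibrant in $\coss[\mathbb C]$. The cosimplicial standard simplex $\stasi$ is Reedy cofibrant in $\coss$, its latching maps being the inclusions of the cosimplicial boundaries, which are monomorphisms. The linearization $\mathbb C\lp-\rp\colon\sset\rightarrow\sset[\mathbb C]$ is a left adjoint that sends monomorphisms to cofibrations; since Reedy cofibrancy asks exactly that the latching maps be cofibrations, and a left adjoint commutes with the latching colimits, the levelwise extension of $\mathbb C\lp-\rp$ preserves Reedy cofibrant objects. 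Thus $\stasic$ is Reedy cofibrant, $\cmorm\lp\stasic\rp$ is cofibrant, and together with the weak equivalence above this exhibits $\cmorm\lp\stasic\rp$ as a cofibrant replacement of $\mathbb C$.

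The step I expect to require the most care is the claim that linearization preserves Reedy cofibrancy: one must identify the latching object $L^j\stasic$ with $\mathbb C\lp L^j\stasi\rp$ (using that the left adjoint commutes with the latching colimit) and check that the induced latching maps are genuine cofibrations in the model structure on $\sset[\mathbb C]$. The bidegree bookkeeping that pins the target of $\cmorm$ to the origin is routine but should be spelled out to justify the precise assertion of mixed degree $0$ and weight $0$.
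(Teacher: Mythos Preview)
Your argument is correct, but it takes a genuinely different route from the paper. The paper computes $\cmorm\lp\stasic\rp$ explicitly: simplicial normalization of $\stasic[j]$ keeps the non-degenerate simplices, and then cosimplicial normalization quotients by the images of the co-degeneracies $\codeg^1,\ldots,\codeg^j$ (not $\codeg^0$), leaving in cosimplicial degree $j$ only the top non-degenerate $j$-simplex $a_j$ and one of its faces $b_j$. The resulting graded mixed complex with basis $\lf a_j\rf_{j\geq 0}\cup\lf b_j\rf_{j\geq 1}$, differentials $\homdi(a_j)=b_j$ and $\dedi(a_j)=b_{j+1}$, is then recognized as the explicit cofibrant replacement $\mathcal Q(0)$ of $\mathbb C$ built in the proof of \cite{PTVV13} Prop.\@ 1.3.

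Your approach avoids this combinatorics entirely: you observe that $\stasic$ is Reedy cofibrant, that the augmentation $\stasic\rightarrow\underline{\mathbb C}$ is a levelwise weak equivalence, and that $\cmorm$, being left Quillen and (by the argument for Prop.\@ \ref{cosimplicialnorm}) preserving all weak equivalences, sends this to a weak equivalence from a cofibrant object to $\mathbb C$ in bidegree $(0,0)$. This is cleaner and more conceptual. What the paper's explicit computation buys is the identification of $\cmorm\lp\stasic\rp$ with the very object $\mathcal Q(0)$ that \cite{PTVV13} uses to produce the formula $\map{\cmorm\lp\stasic\rp}{T}\cong\negacy[0]{\leq 0}(T)$; with your argument one still obtains this formula, but by invoking the characterization of $\negacy[0]{\bullet}$ as the derived mapping complex out of $\mathbb C$ rather than by citing the explicit resolution.
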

\begin{proof} For each $j\in\noneg$ simplicial normalization $\snorm\lp\stasic[j]\rp$ of $\stasic[j]$ is generated by the non-degenerate simplices of $\stasic[j]$. When we apply the cosimplicial normalization to $\lf\snorm\lp\stasic[j]\rp\rf_{j\in\noneg}$ we throw away the bulk subcomplex, which for each $j\geq 1$ consists of the sum of images of the co-face maps $\lf\codeg^r\colon\stasic[j-1]\hookrightarrow\stasic[j]\rf_{1\leq r\leq j}$ (e.g.\@ \cite{DK4} \S3.2\hide{, \cite{Good} \S2}). Notice the absence of $\codeg^0$. This means that all simplices in $\stasi[j]$ are hit, except for the non-degenerate simplex in dimension $j$ and one of its faces. Therefore $\cmorm\lp\stasic\rp$ has a basis over $\mathbb C$ consisting of $\lf a_j\rf_{j\geq 0}\cup\lf b_j\rf_{j\geq 1}$, where each $a_j$ has mixed degree $-2j$ and weight $j$, each $b_j$ has mixed degree $-2j+1$ and weight $j$, and $\homdi\lp a_j\rp=b_{j}$, $\dedi\lp a_j\rp=b_{j+1}$. This is exactly the complex $\mathcal Q(0)$ considered in the proof of Prop.\@ 1.3 in \cite{PTVV13}.\end{proof}

Now we look at computing mapping spaces in $\grmix[\mathbb C]{\bullet}{\bullet}$. As the model structure is based on the standard projective model structure of unbounded cochain complexes in $\vect$, we have unbounded cochain complexes of morphisms between objects in $\grmix[\mathbb C]{\bullet}{\bullet}$ (\cite{PTVV13} proof of Prop.\@ 1.3).\hide{ Taking cofibrant replacements, if necessary, these cochain complexes are obtained by taking maps to the target from all possible shifts of the source. Notice that the weight is not shifted, i.e.\@ all elements of the cochain complexes of morphisms have weight $0$. In other words we look at morphisms of dg modules in the category of weight-graded modules over the weight-graded unital ring generated by $\epsilon$ that has weight $1$ and squares to $0$. This also explains why the mapping spaces out of $\cmorm\lp\stasic\rp$ are exactly the product-total complexes.} Truncating these cochain complexes to non-positive degrees we obtain the (normalizations of) the mapping spaces. In other words for any $T\in\grmix[\mathbb C]{\bullet}{\bullet}$ we have (loc.\@ cit.\@)
	\begin{equation}\map[{\grmix[\mathbb C]{\bullet}{\bullet}}]{\cmorm\lp\stasic\rp}{T}\cong\negacy[0]{\leq 0}\lp T\rp,\end{equation}
where $\negacy[0]{\leq 0}$ means truncation to non-positive degrees of the negative cyclic complex of weight $0$. Applying this to (\ref{totmac}) we obtain an explicit description of the space of homotopically closed $2$-forms of degree $\shi$ on a quotient stack as follows.

\begin{proposition}\label{allgroups} The normalization of the simplicial set of homotopically closed $2$-forms of degree $\shi$ on $\quack{\spec[\dera{R}]}{\algr}$ is weakly equivalent to
		\begin{equation}\label{formstack}\staclo{\shi}\lp\quack{\spec[\dera{R}]}{\algr}\rp=\underset{r\leq 0}\bigoplus\;\staclo[r]{\shi}\lp\quack{\spec[\dera{R}]}{\algr}\rp,\end{equation}
	\begin{equation}\label{series}\staclo[r]{\shi}\lp\quack{\spec[\dera{R}]}{\algr}\rp:=\lf\underset{i,j\geq 0}\sum\sara^j\para^i\nform[j]{2+i}\rf,\end{equation}
where $\nform[j]{2+i}$ is a $2+i$-form on $\spec[\dera{R}]\times\lp\algr^{\times^j}\rp$ of degree $r+\shi-2-2i-2j$. The differential on $\staclo{\shi}\lp\quack{\spec[\dera{R}]}{\algr}\rp$ is $\homdi+\para\dedi+\sara\stadi$, where $\stadi$ is the alternating sum of all pullbacks over face maps in $\lf\spec[\dera{R}]\times\lp\algr^{\times^j}\rp\rf_{j\in\noneg}$. Elements of $\staclo[0]{\shi}\lp\quack{\spec[\dera{R}]}{\algr}\rp$ have to be $\homdi+\para\dedi+\sara\stadi$-cocycles.
				
Any homotopically closed $2$-form of degree $\shi$ on $\quack{\spec[\dera{R}]}{\algr}$ can be described as an element of  $\staclo[0]{\shi}\lp\quack{\spec[\dera{R}]}{\algr}\rp$. Two such forms are equivalent if they differ by a $\homdi+\para\dedi+\sara\stadi$-coboundary.\end{proposition}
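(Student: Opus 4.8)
The plan is to string together the weak equivalences established in this subsection and then read off the complex (\ref{series}), its differential, and its degree constraints at the very end. First I would recall that, by the Remark following Prop.\@ \ref{cosimplicialnorm}, the cosimplicial diagram $\lf\hoclo{\shi}\lp\spec[\dera{R}]\times\lp\algr^{\times^j}\rp\rp\rf_{j\in\noneg}$ is Reedy fibrant, so the homotopy limit in (\ref{holim}) is genuinely computed by the total space, i.e.\@ by the mapping space (\ref{totmap}). Linearizing via Rem.\@ \ref{doldkan} rewrites this as (\ref{totmac}), and Cor.\@ \ref{simplicialadjunction} transports it into the category of graded mixed complexes, giving
\[
\map[{\grmix[\mathbb C]{\bullet}{\bullet}}]{\cmorm\lp\stasic\rp}{\cmorm\lp\lf\hoclo{\shi}\lp\spec[\dera{R}]\times\lp\algr^{\times^j}\rp\rp\rf_{j\in\noneg}\rp}.
\]

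Next I would feed in the two computations recorded just above. Since $\cmorm\lp\stasic\rp$ is the cofibrant replacement $\mathcal Q(0)$ of $\mathbb C$, the mapping space out of it is the weight-$0$ negative cyclic complex truncated to non-positive degrees; writing $T:=\cmorm\lp\lf\hoclo{\shi}\lp\spec[\dera{R}]\times\lp\algr^{\times^j}\rp\rp\rf_{j\in\noneg}\rp$, this identifies the mapping space with $\negacy[0]{\leq 0}\lp T\rp$. It remains to make $T$ and this negative cyclic complex explicit. As already observed, in weight $j$ the object $T$ consists of those elements of $\hoclo{\shi}\lp\spec[\dera{R}]\times\lp\algr^{\times^j}\rp\rp$ that pull back to $0$ along every degeneration map, and the weight-raising mixed structure is the alternating sum $\stadi$ of pullbacks along the face maps of $\duack{\spec[\dera{R}]}{\algr}$.

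I would then unwind the two formal-power-series structures separately. Each weight-$j$ slot of $T$ is, as a cochain complex with differential $\homdi+\para\dedi$, the cosimplicially normalized, $\lp\shi-2\rp$-suspended, truncated negative cyclic complex of weight $2$ of the de Rham complex of $\spec[\dera{R}]\times\lp\algr^{\times^j}\rp$, so its elements are the $\para$-power series $\underset{i\geq 0}\sum\para^i\nform[j]{2+i}$. Forming the weight-$0$ negative cyclic complex for the outer mixed structure $\stadi$ then adjoins a second formal parameter $\sara$, producing precisely the double power series $\underset{i,j\geq 0}\sum\sara^j\para^i\nform[j]{2+i}$ of (\ref{series}) with total differential $\homdi+\para\dedi+\sara\stadi$. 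A routine degree count --- $\para^i$ and $\sara^j$ each raise the total degree by $2i$ and $2j$, while the suspension accounts for the shift by $\shi-2$ --- shows that an element of total degree $r$ carries a $\lp 2+i\rp$-form $\nform[j]{2+i}$ of degree $r+\shi-2-2i-2j$, and the non-positive-degree truncation produces exactly the summands with $r\leq 0$. Finally, because the $0$-simplices of a mapping space are its degree-$0$ elements, the homotopically closed $2$-forms on $\quack{\spec[\dera{R}]}{\algr}$ are exactly the $\homdi+\para\dedi+\sara\stadi$-cocycles in $\staclo[0]{\shi}\lp\quack{\spec[\dera{R}]}{\algr}\rp$, with homotopies realized by degree-$\lp-1\rp$ coboundaries.

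The step I expect to be the main obstacle is the bookkeeping in the last paragraph: one must check that the inner de Rham mixed structure $\dedi$ (already folded into each cosimplicial slot by the weight-$2$ negative cyclic construction) and the outer cosimplicial mixed structure $\stadi$ (handled by the weight-$0$ negative cyclic construction) assemble into a single well-defined differential $\homdi+\para\dedi+\sara\stadi$ on the double power series, with compatible signs, and that the truncation to non-positive degrees interacts correctly with both the suspension by $\shi-2$ and the negative cyclic degree shifts, so as to yield exactly the stated degree formula together with the cocycle condition in degree $0$.
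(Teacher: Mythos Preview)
Your chain of equivalences is correct up through the identification of the mapping space with $\negacy[0]{\leq 0}(T)$, where $T=\cmorm\lp\lf\hoclo{\shi}\lp\spec[\dera{R}]\times\lp\algr^{\times^j}\rp\rp\rf_{j\in\noneg}\rp$. You also correctly observe that in weight $j$ the complex $T$ consists only of those forms that pull back to $0$ along every degeneration map --- this is the cosimplicial normalization. The gap is in the next step: you then assert that the resulting double power series is ``precisely'' (\ref{series}), but the forms $\nform[j]{2+i}$ appearing in (\ref{series}) are \emph{arbitrary} $(2+i)$-forms on $\spec[\dera{R}]\times\lp\algr^{\times^j}\rp$, with no normalization condition imposed. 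So what you have actually computed is a proper subcomplex of $\staclo{\shi}\lp\quack{\spec[\dera{R}]}{\algr}\rp$, not the whole thing.

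This is exactly the point the paper's proof addresses. The complex (\ref{formstack}) is the mapping space from $\cmorm\lp\stasic\rp$ into the graded mixed complex of \emph{all} forms on the simplicial diagram, whereas Corollary \ref{simplicialadjunction} only delivers the mapping space into $T$, the normalized subcomplex. The difference is the mapping space into the bulk complex of the cosimplicial normalization. The paper observes that this bulk double complex is acyclic in the vertical (cosimplicial) direction, and then invokes the Acyclic Assembly Lemma (\cite{Wei} Lemma 2.7.3) to conclude that its product-total complex --- which is precisely the mapping space from $\cmorm\lp\stasic\rp$ into the bulk --- is acyclic. Hence the inclusion of your subcomplex into (\ref{formstack}) is a weak equivalence. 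Without this argument, your proof establishes the weak equivalence only with a complex smaller than the one in the statement.
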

\begin{proof} From the proof of Prop.\@ 1.3 in \cite{PTVV13} we know that (\ref{formstack}) is the space of maps from $\cmorm\lp\stasic\rp$ to graded mixed complex of \emph{all} forms on $\lf\spec[\dera{R}]\times\lp\algr^{\times^j}\rp\rf_{j\geq 0}$. The only reason this Proposition does not immediately follow from Corollary \ref{simplicialadjunction} is that the space 
		\begin{equation}\label{truespace}\map[{\grmix[\mathbb C]{\bullet}{\bullet}}]{\cmorm\lp\stasic\rp}{\cmorm\lp\lf\hoclo{\shi}\lp\spec[\dera{R}]\times\lp\algr^{\times^j}\rp\rp\rf_{j\in\noneg}\rp}\end{equation} 
is not all of (\ref{formstack}) because normalization does not consist of all forms. In (\ref{truespace}) there is an additional condition that $\forall i,j$ pullbacks of $\nform[j]{2+i}$ over all degeneration maps in $\lf\spec[\dera{R}]\times\lp\algr^{\times^j}\rp\rf_{j\in\noneg}$ are $0$. In other words (\ref{formstack}) contains also the mapping space from $\cmorm\lp\stasic\rp$ to the bulk complex in cosimplicial normalization. However, this bulk double complex is acyclic in the vertical direction (e.g.\@ \cite{DK4} \S3.2) and the Acyclic Assembly Lemma (e.g.\@ \cite{Wei} Lemma 2.7.3 p.\@ 59) tells us that the corresponding product-total complex is acyclic. We observe that this product-total complex is exactly the mapping space from $\cmorm\lp\stasic\rp$ to the bulk double complex.\end{proof}

If we want to describe homotopically closed forms on $\quack{\dgsch}{\algr}$, where $\dgsch$ is a dg manifold that is not necessarily affine, we can represent $\quack{\dgsch}{\algr}$ as a simplicial diagram of quotients of affine dg manifolds by actions of $\algr$. The entire process can be repeated and we would obtain expressions as in (\ref{series}) just with an additional formal parameter keeping track of the number of affine charts that intersect. The differential will have an additional summand as well.

\begin{remark}\label{hypercohomology} Assuming that $\dgsch[0]$ has a good quotient with respect to the action of $\algr$\hide{\footnote{As $\dgsch\rightarrow\dgsch[0]$ is a (graded) affine morphism, existence of a good quotient in degree $0$ implies the same in all degrees.}} there is an equivalent way to describe the space of homotopically closed forms on $\quack{\dgsch}{\algr}$.  We can use $\staclo{\shi}\lp\quack{\spec[\dera{R}]}{\algr}\rp$ to construct a sheaf $\staclo{\shi}$ of $\mathbb C$-linear cochain complexes on the topological space $\quospa$ underlying $\quo{\dgsch}{\algr}$. Then $\hyperco[0]\lp\quospa,\staclo{\shi}\rp$ is the vector space of equivalence classes of homotopically closed $2$-forms of degree $\shi$ on $\quack{\dgsch}{\algr}$. One can compute hypercohomology by using \v{C}ech covers consisting of affine charts, as it was done for derived schemes in \cite{BSY} Def.\@ 10.\end{remark}

\bigskip

In addition to being homotopically closed, a shifted symplectic structure $\underset{i,j\geq 0}\sum\sara^j\para^i\nform[j]{2+i}$ has to be \emph{non-degenerate}, i.e.\@ its $\para$-free term $\underset{j\geq 0}\sum\sara^j\nform[j]{2}$ has to define a weak equivalence between the tangent complex suspended $-\shi$ times and the cotangent complex. These complexes are defined for points in stacks, and in the case of $\quack{\spec[\dera{R}]}{\algr}$ it is enough to look at just one point: the canonical $\spec[\dera{R}]\rightarrow\quack{\spec[\dera{R}]}{\algr}$. The cotangent complex for this point is the total complex of the bi-complex
\begin{equation}\label{bicomplex}\derham{{\dera{R}}}\rightarrow\allie^*,\end{equation}
where $\allie^*$ is the trivial bundle on $\spec[{\dera{R}}]$ with the fiber being the $\mathbb C$-linear dual of the Lie algebra of $\algr$. In this situation the free term $\nform[0]{2}$ on $\spec[\dera{R}]$ is made of sections of $\derham{\dera{R}}$, while restriction of $\nform[1]{2}$ to the degeneration map $\spec[\dera{R}]\hookrightarrow\spec[\dera{R}]\times\algr$ involves sections of $\derham{\dera{R}}$ and those of $\allie^*$. It is the latter term that pairs ${\rm Ext}^0$ and ${\rm Ext}^{-\shi+2}$ of sheaves on a Calabi--Yau $2-\shi$-fold.

\subsection{Invariant symplectic structures and Lagrangian distributions}\label{twotwo}

The simplicial set 
	\begin{equation}\map{\quack{\spec[\dera{R}]}{\algr}}{\hoclo{\shi}}\end{equation} 
describes the space of $2$-forms on $\spec[\dera{R}]$ that are homotopically closed and $\algr$-invariant up to homotopy at the same time. Sometimes it is useful to have homotopically closed forms on $\spec[\dera{R}]$ that are strictly $\algr$-invariant in a sense to be made precise below. It becomes especially useful when every homotopically closed form on $\quack{\spec[\dera{R}]}{\algr}$ can be strictified in such a way. In this section we show this to be always true, if $\algr$ is linearly reductive. We denote by
	\begin{equation}\striclo{\shi}\lp\quack{\spec[\dera{R}]}{\algr}\rp\subseteq\staclo{\shi}\lp\quack{\spec[\dera{R}]}{\algr}\rp\end{equation}
the subcomplex consisting of $\underset{i,j\geq 0}\sum\sara^j\para^i\nform[j]{2+i}$ s.t.\@ each $\underset{i\geq 0}\sum\para^i\nform[0]{2+i}$ is invariant with respect to the action of $\algr$ on $\derham[\bullet]{\spec[\dera{R}]}$ induced by the action of $\algr$ on $\spec[\dera{R}]$, and $\forall j>0$, $\forall i\geq 0$ the form $\nform[j]{2+i}\in\derham[\bullet]{\spec[\dera{R}]\times\lp\algr^{\times^j}\rp}$ belongs to the ideal $\relide[j]\subseteq\derham[\bullet]{\spec[\dera{R}]\times\lp\algr^{\times^j}\rp}$ generated by $\pi_1^*\lp\derham[\bullet]{\algr^{\times^j}}\rp$, where $$\pi_1\colon\spec[\dera{R}]\times\lp\algr^{\times^j}\rp\longrightarrow\algr^{\times^j}$$ is the projection. We will call $\striclo{\shi}\lp\quack{\spec[\dera{R}]}{\algr}\rp$ \emph{the space of strictly $\algr$-invariant homotopically closed $2$-forms on $\quack{\spec[\dera{R}]}{\algr}$}.

\begin{proposition} Suppose that $\algr$ is linearly reductive, then 
		\begin{equation}\striclo{\shi}\lp\quack{\spec[\dera{R}]}{\algr}\rp\hooklongrightarrow\staclo{\shi}\lp\quack{\spec[\dera{R}]}{\algr}\rp\end{equation} 
is a weak equivalence of non-positively graded cochain complexes in $\vect$.\end{proposition}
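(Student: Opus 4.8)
The plan is to turn the abstract statement ``derived $\algr$-invariants agree with strict $\algr$-invariants'' into an explicit deformation retraction of $\staclo{\shi}\lp\quack{\spec[\dera{R}]}{\algr}\rp$ onto the subcomplex $\striclo{\shi}\lp\quack{\spec[\dera{R}]}{\algr}\rp$, the engine being the Reynolds operator attached to a linearly reductive group. Recall from Prop.\@ \ref{allgroups} that an element is a formal series $\underset{i,j\geq 0}\sum\sara^j\para^i\nform[j]{2+i}$ with total differential $\homdi+\para\dedi+\sara\stadi$, where $\sara\stadi$ is the alternating sum of the pullbacks along the face maps of the simplicial bar construction $\lf\spec[\dera{R}]\times\lp\algr^{\times^j}\rp\rf_{j\in\noneg}$. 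Thus the $\sara$-direction is exactly the cosimplicial complex computing $R\Gamma\lp\algr,\derham[\bullet]{\spec[\dera{R}]}\rp$, and the content of the proposition is that for reductive $\algr$ this computation collapses onto the honest invariants, \emph{together with} the Lie-algebra correction terms visible in positive $\sara$-weight.

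The decisive input is that linear reductivity makes the category of rational $\algr$-representations semisimple: the invariants functor $\lp-\rp^\algr$ is exact, and there is a functorial, $\algr$-equivariant Reynolds projection $\rho$ onto invariants. I would first record the K\"unneth decomposition $\derham[\bullet]{\spec[\dera{R}]\times\lp\algr^{\times^j}\rp}\cong\derham[\bullet]{\spec[\dera{R}]}\otimes\lp\mathcal O\lp\algr\rp\otimes\Lambda^\bullet\allie^*\rp^{\otimes j}$, which exhibits the $\sara$-direction as the standard bar complex with coefficients in $\derham[\bullet]{\spec[\dera{R}]}$, the cofree factors $\mathcal O\lp\algr\rp$ being injective objects since $\algr$ is reductive. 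Averaging with $\rho$ on each group factor produces a contracting homotopy $h$ for $\sara\stadi$ whose fixed locus is, by design, invariant forms in weight $j=0$ and forms lying in the ideal $\relide[j]$ for $j>0$. This is exactly the defining condition of $\striclo{\shi}$, so $\rho$ and $h$ assemble into a retraction onto $\striclo{\shi}$ for the differential $\sara\stadi$ alone.

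It then remains to upgrade this to the full differential $\homdi+\para\dedi+\sara\stadi$ by a Homological Perturbation Lemma argument. Since the $\algr$-action is by automorphisms of the differential graded algebra $\dera{R}$, the projection $\rho$ commutes with $\homdi$ and with $\dedi$, so $h$ is a morphism of the underlying $\lp\homdi,\dedi\rp$-bicomplexes; I would first perturb by $\homdi$, which preserves the $\sara$-weight $j$ while $h$ strictly lowers it, so the perturbation series is locally finite because $j\geq 0$ is bounded below, and then perturb by $\para\dedi$, which raises the $\para$-weight $i$ and hence converges $\para$-adically in the completed complex of formal power series. The outcome is a deformation retraction of $\staclo{\shi}$ onto $\striclo{\shi}$ compatible with the total differential, so the inclusion is a chain homotopy equivalence, in particular a quasi-isomorphism of non-positively graded cochain complexes.

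The step I expect to be the main obstacle is the bookkeeping forced by the mixed forms: $\derham[\bullet]{\spec[\dera{R}]\times\lp\algr^{\times^j}\rp}$ is not merely $\mathcal O\lp\algr^{\times^j}\rp\otimes\derham[\bullet]{\spec[\dera{R}]}$ but carries de Rham legs $\Lambda^\bullet\allie^*$ along the group directions, so I must check that the averaging homotopy $h$ can be chosen to commute with $\dedi$ in those directions and that its fixed locus is \emph{exactly} the ideal $\relide[j]$ rather than some larger invariant subspace. Verifying this precise matching of the homotopy fixed points with the closed-form conditions cutting out $\striclo{\shi}$, and hence that the positive-weight correction terms $\nform[j]{2+i}$ produced by the retraction genuinely lie in $\relide[j]$, is the only non-formal point; everything else is a direct consequence of the semisimplicity of rational $\algr$-modules.
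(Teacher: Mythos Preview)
Your approach is correct in spirit and would work, but it is genuinely different from the paper's argument, and the comparison is instructive.

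The paper does not build a retraction. Instead it passes to the cokernel of the inclusion. The key observation is that $\relide[j]$ is closed under both $\homdi$ and $\dedi$, so one has a short exact sequence of complexes, and the quotient $\staclo{\shi}/\striclo{\shi}$ is identified as the product-total complex of the double complex $\irre[0]/\kerre\rightarrow\irre[1]\rightarrow\irre[2]\rightarrow\ldots$, where $\{\irre[j]\}_{j\geq 0}$ is precisely the standard cosimplicial Hochschild complex for the $\algr$-action on $\hoclo{\shi}\lp\spec[\dera{R}]\rp$ and $\kerre$ is the invariants. Reductivity makes this column-wise acyclic, and the Acyclic Assembly Lemma finishes the argument in one line. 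No perturbation is needed, because one never asks for a homotopy on the big complex, only for acyclicity of the quotient.

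Your route via Reynolds and the Homological Perturbation Lemma is the ``explicit'' counterpart: it produces an honest deformation retraction rather than merely a quasi-isomorphism. The price is exactly the bookkeeping you flag as the main obstacle, namely checking that the averaging homotopy, lifted from $\{\irre[j]\}$ to the full de Rham complex with its group-direction legs, really has fixed locus equal to $\striclo{\shi}$ and interacts correctly with $\dedi$. This is doable but not automatic, since the natural graded-module complement to $\relide[j]$ is not $\dedi$-stable, so the lifted homotopy will not commute with $\dedi$ on the nose and the perturbation series genuinely has work to do. The paper's quotient argument sidesteps all of this: once you know $\relide[j]$ is a $\homdi$- and $\dedi$-stable sub, the mixed legs are absorbed into the subcomplex and never need to be tracked through a homotopy.

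In short: both arguments rest on the same input (semisimplicity of rational $\algr$-modules), but the paper's is shorter because it proves acyclicity of the cokernel rather than constructing a section, while yours would give an explicit homotopy inverse at the cost of the verification you correctly anticipate.
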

\begin{proof} For each $j\geq 1$ the ideal $\relide[j]$ is clearly closed with respect to both $\homdi$ and $\dedi$. The subalgebra  $\pi_0^*\lp\derham[\bullet]{\spec[\dera{R}]}\rp\subseteq \derham[\bullet]{\spec[\dera{R}]\times\lp\algr^{\times^j}\rp}$, where $$\pi_0\colon\spec[\dera{R}]\times\lp\algr^{\times^j}\rp\rightarrow \spec[\dera{R}]$$ is the projection, is a complement to $\relide[j]$ as a graded submodule of $\derham[\bullet]{\spec[\dera{R}]\times\lp\algr^{\times^j}\rp}$.\footnote{Notice that $\pi_0^*\lp\derham[\bullet]{\spec[\dera{R}]}\rp$ is closed only with respect to $\homdi$.} Denoting $\relide[0]:=\lf0\rf\subseteq\derham[\bullet]{\spec[\dera{R}]}$ and applying $\negacy{\bullet}$ we have a cosimplicial diagram of inclusions
		\begin{equation}\forall j\geq 0\quad\negacy{\bullet}\lp\relide[j]\rp\subseteq\negacy{\bullet}\lp\derham[\bullet]{\spec[\dera{R}]\times\lp\algr^{\times^j}\rp}\rp.\end{equation}
Shifting upwards by $2-\shi$ and truncating to non-positive degrees we have another cosimplicial diagram
	\begin{equation}\forall j\geq 0\quad\negacy{\leq\shi-2}\lp\relide[j]\rp\subseteq\hoclo{\shi}\lp\spec[\dera{R}]\times\lp\algr^{\times^j}\rp\rp.\end{equation}
It is immediate to see that the cosimplicial diagram of quotients 
	\begin{equation}\lf\irre[j]\rf_{j\geq 0}:=\lf\hoclo{\shi}\lp\spec[\dera{R}]\times\lp\algr^{\times^j}\rp\rp/\negacy{\leq\shi-2}\lp\relide[j]\rp\rf_{j\geq 0}\end{equation} 
is exactly the diagram of Hochschild cochains for the action of $\algr$ on $\hoclo{\shi}\lp\spec[\dera{R}]\rp$ induced by the action on $\spec[\dera{R}]$. Since $\algr$ is reductive, taking the cosimplicial normalization of $\lf\irre[j]\rf_{j\geq 0}$, we obtain a non-negatively graded cochain complex of non-positively graded cochain complexes, s.t.\@ the vertical differential has $0$ cohomology everywhere, except in $\irre[0]$. Let $\kerre$ be the kernel of $\irre[0]\rightarrow\irre[1]$, we have an acyclic complex of non-positively graded complexes
	\begin{equation}\irre[0]/\kerre\longrightarrow\irre[1]\longrightarrow\irre[2]\longrightarrow\ldots,\end{equation}
hence its product-total complex is acyclic (e.g.\@ \cite{Wei} Lemma 2.7.3 p.\@ 59). \hide{See also The Stacks Project, Lemma 26.3  in Part 1 Chapter 12.} We notice that this product-total complex is exactly the quotient $$\staclo{\shi}\lp\quack{\spec[\dera{R}]}{\algr}\rp/\striclo{\shi}\lp\quack{\spec[\dera{R}]}{\algr}\rp.$$\end{proof}

Now we can have a more convenient reformulation of Proposition \ref{allgroups} in the reductive case.

\begin{proposition}\label{reductive} Let $\algr$ be a linearly reductive group acting on an affine dg manifold $\spec[\dera{R}]$. The normalization of the simplicial set of homotopically closed $2$-forms of degree $\shi$ on $\quack{\spec[\dera{R}]}{\algr}$ is weakly equivalent to
	\begin{equation}\striclo{\shi}\lp\quack{\spec[\dera{R}]}{\algr}\rp=\underset{r\leq 0}\bigoplus\;\striclo[r]{\shi}\lp\quack{\spec[\dera{R}]}{\algr}\rp,\end{equation}
	\begin{equation}\striclo[r]{\shi}\lp\quack{\spec[\dera{R}]}{\algr}\rp:=\lf\underset{i,j\geq 0}\sum\sara^j\para^i\nform[j]{2+i}\rf,\end{equation}
	where\begin{itemize}
		\item $\forall i\geq 0$ $\nform[0]{2+i}$ is a $\algr$-invariant $2+i$-form on $\spec[\dera{R}]$ of degree $r+d-2i$ and
		\item $\forall j\geq 1$, $\forall i\geq 0$ $\nform[j]{2+i}$ is a $2+i$-form on $\spec[\dera{R}]\times\lp\algr^{\times^j}\rp$ of degree $r+\shi-2-2i-2j$ belonging to the ideal generated by $\derham[\bullet]{\algr^{\times^j}}$.\end{itemize} 
	The differential on $\striclo{\shi}\lp\quack{\spec[\dera{R}]}{\algr}\rp$ is $\homdi+\para\dedi+\sara\stadi$, where $\stadi$ is the alternating sum of all pullbacks over face maps in $\lf\spec[\dera{R}]\times\lp\algr^{\times^j}\rp\rf_{j\in\noneg}$. Elements of $\striclo[0]{\shi}\lp\quack{\spec[\dera{R}]}{\algr}\rp$ have to be $\homdi+\para\dedi+\sara\stadi$-cocycles.
	
	Any homotopically closed $2$-form of degree $\shi$ on $\quack{\spec[\dera{R}]}{\algr}$ can be described as an element of  $\striclo[0]{\shi}\lp\quack{\spec[\dera{R}]}{\algr}\rp$. Two such forms are equivalent if they differ by a $\homdi+\para\dedi+\sara\stadi$-coboundary.\end{proposition}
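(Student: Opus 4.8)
The plan is to obtain this statement by composing two weak equivalences that are already established, since Proposition \ref{reductive} is essentially a repackaging of Proposition \ref{allgroups} under the linear-reductivity hypothesis. Proposition \ref{allgroups} identifies the normalization of the simplicial set of homotopically closed $2$-forms of degree $\shi$ on $\quack{\spec[\dera{R}]}{\algr}$ with the complex $\staclo{\shi}\lp\quack{\spec[\dera{R}]}{\algr}\rp$, together with its explicit description as formal power series in $\sara,\para$, its differential $\homdi+\para\dedi+\sara\stadi$, and the cocycle condition on $\staclo[0]{\shi}$. The preceding Proposition, which uses linear reductivity of $\algr$ in an essential way, shows that the inclusion $\striclo{\shi}\lp\quack{\spec[\dera{R}]}{\algr}\rp\hooklongrightarrow\staclo{\shi}\lp\quack{\spec[\dera{R}]}{\algr}\rp$ is a weak equivalence of non-positively graded cochain complexes in $\vect$. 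First I would invoke these two results and compose them: since weak equivalences are invertible in the homotopy category, the chain ``normalization $\simeq\staclo{\shi}\hookleftarrow\striclo{\shi}$'' yields the sought weak equivalence between the normalization and $\striclo{\shi}\lp\quack{\spec[\dera{R}]}{\algr}\rp$.

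Next I would transport the explicit description of Proposition \ref{allgroups} across the inclusion to recover the description asserted here. A general element of $\staclo{\shi}$ is $\underset{i,j\geq 0}\sum\sara^j\para^i\nform[j]{2+i}$; its restriction to the subcomplex $\striclo{\shi}$ is cut out by the two defining conditions of $\striclo{\shi}$, namely that the $j=0$ part $\underset{i\geq 0}\sum\para^i\nform[0]{2+i}$ be $\algr$-invariant, and that for every $j\geq 1$ and $i\geq 0$ the form $\nform[j]{2+i}$ lie in the ideal $\relide[j]$ generated by $\pi_1^*\lp\derham[\bullet]{\algr^{\times^j}}\rp$. These are precisely the two conditions displayed in the statement. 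The differential $\homdi+\para\dedi+\sara\stadi$, the cocycle requirement on $\striclo[0]{\shi}$, and the coboundary equivalence all descend from $\staclo{\shi}$ to the subcomplex verbatim.

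The one point that requires genuine care — and which I would treat as the main, if modest, obstacle — is confirming that the conditions defining $\striclo{\shi}$ are stable under the \emph{full} differential, so that the listed power series really form a subcomplex on which the differential acts as claimed, and then tracking the internal degrees of the surviving summands through the suspension built into $\hoclo{\shi}$. Closure under $\homdi$ and $\dedi$ is already recorded in the proof of the preceding Proposition, where $\relide[j]$ is seen to be stable under both and the de Rham data on $\spec[\dera{R}]$ preserves $\algr$-invariance. What remains is the simplicial part $\stadi$: being the alternating sum of pullbacks along the face maps of $\lf\spec[\dera{R}]\times\lp\algr^{\times^j}\rp\rf_{j\in\noneg}$, it preserves both $\algr$-invariance and membership in the ideals $\relide[j]$, because those face maps are $\algr$-equivariant and compatible with the projections $\pi_0$ and $\pi_1$. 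With this verified, the internal degrees of the $j=0$ and $j\geq 1$ pieces are read off from the description in Proposition \ref{allgroups}, and the statement follows with no further homotopical input beyond the two propositions combined.
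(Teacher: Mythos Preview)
Your proposal is correct and matches the paper's approach exactly: the paper presents Proposition \ref{reductive} without a separate proof, introducing it simply as ``a more convenient reformulation of Proposition \ref{allgroups} in the reductive case,'' i.e.\@ the composition of Proposition \ref{allgroups} with the immediately preceding proposition establishing that $\striclo{\shi}\hookrightarrow\staclo{\shi}$ is a weak equivalence. Your additional verification that $\striclo{\shi}$ is closed under the full differential $\homdi+\para\dedi+\sara\stadi$ is a point the paper leaves implicit, so you have in fact supplied slightly more detail than the original.
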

Remark \ref{hypercohomology} also gets a reductive reformulation, stating that gluing strictly $\algr$-invariant homotopically closed forms on intersections of affine charts can be done using strictly $\algr$-invariant forms again. Explicitly we have the following

\begin{proposition} Let $\algr$ be a linearly reductive group acting on a dg manifold $\dgsch$, s.t.\@ the action on $\dgsch[0]$ admits a good quotient. Let $\quospa$ be the topological space underlying $\quo{\dgsch}{\algr}$. Then $\striclo{\shi}$ defines a sheaf of non-positively graded $\mathbb C$-linear cochain complexes on $\quospa$ and
	\begin{equation}\hyperco[\bullet]\lp\quospa,\striclo{\shi}\rp\cong\hyperco[\bullet]\lp\quospa,\staclo{\shi}\rp.\end{equation}
In particular the space of homotopically closed $2$-forms of degree $\shi$ on $\quack{\dgsch}{\algr}$ is isomorphic to $\hyperco[0]\lp\quospa,\striclo{\shi}\rp$.\end{proposition}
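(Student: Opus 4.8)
The plan is to reduce the statement to the affine case already settled in Proposition \ref{reductive} and in the weak-equivalence Proposition preceding it, and then to globalize by a standard hypercohomology argument. By the standing convention that $\dgsch$ admits an atlas of $\algr$-invariant affine dg manifolds $\spec[\dera{R}]$, and since the good quotient of $\dgsch[0]$ exists (hence so does that of $\dgsch$, the projection being graded affine), these charts restrict to good quotients $\quo{\spec[\dera{R}]}{\algr}$ whose underlying spaces form a basis of the topology of $\quospa$. On this basis I would assign the non-positively graded cochain complex $\striclo{\shi}\lp\quack{\spec[\dera{R}]}{\algr}\rp$ of Proposition \ref{reductive}, exactly as $\staclo{\shi}$ was assigned in Remark \ref{hypercohomology}, with differential $\homdi+\para\dedi+\sara\stadi$.

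First I would verify that this assignment is a subsheaf of $\staclo{\shi}$. The two defining conditions of strict invariance are local on $\spec[\dera{R}]$: the requirement that $\underset{i\geq 0}\sum\para^i\nform[0]{2+i}$ be $\algr$-invariant is the equality of two pullbacks under the action map and a projection, which can be tested on any invariant affine cover; and for $j\geq 1$ the requirement that $\nform[j]{2+i}$ lie in the ideal $\relide[j]$ generated by $\derham[\bullet]{\algr^{\times^j}}$ is membership in a subsheaf, again a local condition. The restriction maps preserve strictness and commute with the differential, since shrinking $\spec[\dera{R}]$ to an invariant affine open pulls $\spec[\dera{R}]\times\lp\algr^{\times^j}\rp$ back to the corresponding product and carries generators of $\relide[j]$ to generators of the smaller ideal. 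Hence $\striclo{\shi}$ is a well-defined subsheaf of the sheaf $\staclo{\shi}$ of non-positively graded $\mathbb C$-linear cochain complexes on $\quospa$.

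Next, the inclusion $\striclo{\shi}\hooklongrightarrow\staclo{\shi}$ is a morphism of complexes of sheaves, natural in the chart by functoriality of all the constructions involved. On every basic open it is a quasi-isomorphism: this is precisely the weak equivalence proved (for affine $\spec[\dera{R}]$) in the Proposition preceding Proposition \ref{reductive}, whose proof uses linear reductivity of $\algr$ together with the Acyclic Assembly Lemma. Thus the inclusion induces isomorphisms $H^n\lp\striclo{\shi}\lp\quack{\spec[\dera{R}]}{\algr}\rp\rp\overset{\cong}\rightarrow H^n\lp\staclo{\shi}\lp\quack{\spec[\dera{R}]}{\algr}\rp\rp$ on presheaf cohomology over the basis. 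Since the cohomology sheaves $\mathcal H^n\lp\striclo{\shi}\rp$ and $\mathcal H^n\lp\staclo{\shi}\rp$ are the sheafifications of these presheaf cohomologies, and sheafification is determined by values on a basis, the induced maps $\mathcal H^n\lp\striclo{\shi}\rp\rightarrow\mathcal H^n\lp\staclo{\shi}\rp$ are isomorphisms of sheaves. Therefore $\striclo{\shi}\hooklongrightarrow\staclo{\shi}$ is a quasi-isomorphism of complexes of sheaves on $\quospa$. As both complexes are bounded above (concentrated in non-positive degrees), a quasi-isomorphism induces an isomorphism on hypercohomology, giving $\hyperco[\bullet]\lp\quospa,\striclo{\shi}\rp\cong\hyperco[\bullet]\lp\quospa,\staclo{\shi}\rp$; the same conclusion can be read off concretely from the \v{C}ech double complex of the affine cover of Remark \ref{hypercohomology}, whose two versions differ by a termwise quasi-isomorphism in the sheaf direction and hence have isomorphic spectral sequences from the first page on. Finally, the reductive reformulation of Remark \ref{hypercohomology} identifies $\hyperco[0]\lp\quospa,\staclo{\shi}\rp$ with the space of homotopically closed $2$-forms of degree $\shi$ on $\quack{\dgsch}{\algr}$, so the degree-$0$ part of the isomorphism identifies this space with $\hyperco[0]\lp\quospa,\striclo{\shi}\rp$, as claimed.

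The main obstacle is the first step: establishing that strictness is a genuinely local condition, compatible with the restriction maps as one moves between overlapping invariant affine charts, so that the affine-level strict complexes glue into a single subsheaf and the affine-level comparison maps assemble into one morphism of complexes of sheaves. Once this naturality is in place, everything downstream is formal homological algebra, with linear reductivity entering only through the affine quasi-isomorphism already available from the preceding propositions.
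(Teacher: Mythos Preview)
Your proposal is correct and follows the same line as the paper: the inclusion $\striclo{\shi}\hookrightarrow\staclo{\shi}$ is a local weak equivalence by the affine result (the Proposition immediately preceding Proposition~\ref{reductive}), hence a quasi-isomorphism of complexes of sheaves, hence an isomorphism on hypercohomology. The paper's own proof is a two-sentence version of this; you have simply spelled out the sheaf-theoretic verifications (strictness is a local condition, restriction maps respect it) that the paper takes for granted.
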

\begin{proof} According to Prop.\@ \ref{reductive} the inclusion $\striclo{\shi}\hookrightarrow\staclo{\shi}$ is a local weak equivalence of sheaves of cochain complexes. Therefore the induced map on hypercohomology groups is an isomorphism.\end{proof}

\medskip

Now we turn to integrable distributions. This is an old concept, and accordingly there are many names for it in different kinds of geometry: Lie--Rinehart algebras, Lie algebroids, foliations etc. Keeping the notation compatible with \cite{BSY} we use the term \emph{integrable distributions}. We will consider two equivalent ways to define these. First: an integrable distribution on an affine dg manifold $\spec[\dera{R}]$ is given by a perfect dg $\dera{R}$-module $\lmin$ with generators in non-negative degrees, a $\mathbb C$-linear dg Lie algebra structure on $\lmin$ and an $\dera{R}$-linear \emph{anchor map} $\anchor\colon\lmin\rightarrow\tanga{\spec[\dera{R}]}$ that is a morphism of dg Lie algebras and satisfies the well known conditions (e.g.\@ \cite{Mackenzie} Def.\@ 3.3.1 p.\@ 100). 

The second way is an $\dera{R}$-linear dual formulation using Koszul duality: an integrable distribution is defined as a morphism of graded mixed algebras
	\begin{equation}\label{coanchor}\coanchor\colon\derham[\bullet]{\dera{R}}\longrightarrow\gramial\end{equation}
satisfying some conditions (\cite{P14}, \cite{AlgFoliations} Def.\@ 1.2, or the strictified version \cite{BSY} Def.\@ 3). These conditions force us to consider a proper subcategory of the category of all possible morphisms of graded mixed algebras (\ref{coanchor}), and then there is a categorical equivalence between the two definitions.

\smallskip

To define integrable distributions on stacks one needs contravariant functoriality, i.e.\@ to be able to pull back an integrable distribution over a morphism of affine dg manifolds. This is also an old construction (\cite{HM} p.\@ 203), and in using it one obtains a stack of integrable distributions on $\saff$ (\cite{P14}, \cite{AlgFoliations} Prop.\@ 1.2.3). Here we need to be careful what kind of stacks we are talking about. 

For each $\spec[\dera{R}]$ there is a model category $\alldis{\spec[\dera{R}]}$ of all integrable distributions on $\spec[\dera{R}]$, and we can view it as an $\infty$-category (e.g.\@ a category enriched in $\sset$). The stack considered in \cite{AlgFoliations} is a stack of $\sset$-categories. Given a stack $\astack$ on $\saff$ the $\infty$-category of integrable distributions on $\astack$ is defined then (\cite{AlgFoliations} Def.\@ 1.2.4) as
	\begin{equation}\alldis{\astack}:=\underset{\spec[\dera{R}]\rightarrow\astack}\holim\alldis{\spec[\dera{R}]},\end{equation}
computed in the category of $\sset$-categories. In this paper we are not interested in all of the resulting $\infty$-category, but only in the maximal $\infty$-subgroupoid.\footnote{I.e.\@ the largest $\sset$-subcategory, whose category of connected components is a groupoid.} A $\sset$-enriched groupoid can be equivalently described by its nerve, and it is this simplicial set that we would like to investigate (in fact just the set of connected components in it).

There is an equivalent way to obtain the same simplicial set. The homotopically coherent nerve construction gives us a right Quillen functor in a Quillen equivalence between model categories of $\sset$-categories and quasi-categories (\cite{Lurie} Thm.\@ 2.2.5.1 p.\@ 89). In turn there is a right Quillen functor from the category of quasi-categories to the category of simplicial sets with the usual model structure (\cite{JoyalT} Thm.\@ 1.19 p.\@ 283) which extracts the largest Kan subcomplex out of a quasi-category (\cite{JoyalT} Prop.\@ 1.16 p.\@ 283, Prop.\@ 1.20 p.\@ 284). Altogether this amounts to extracting the largest simplicial sub-groupoid from a $\sset$-category and taking its homotopically coherent nerve, or equivalently the usual nerve (\cite{CohNerve} \S2.6). For each $\spec[\dera{R}]$ we denote by $\kan[\alldis{\spec[\dera{R}]}]$ the resulting simplicial set. Since this is a right Quillen functor we have (\cite{Hirsh} Thm.\@ 19.4.5 p.\@ 415)
	\begin{equation}\kan[\underset{\spec[\dera{R}]\rightarrow\astack}\holim\alldis{\spec[\dera{R}]}]\simeq\underset{\spec[\dera{R}]\rightarrow\astack}\holim\kan[\alldis{\spec[\dera{R}]}].\end{equation}
This means that we can restrict to the maximal $\infty$-subgroupoids $\alldig{\spec[\dera{R}]}\subseteq\alldis{\spec[\dera{R}]}$ from the beginning and we define \emph{the stack of integrable distributions} to be 
		\begin{equation}\stadis\colon\spec[\dera{R}]\longmapsto\nerve{\alldig{\spec[\dera{R}]}},\end{equation}
where $\nerve{-}$ stands for the usual nerve construction. Then an integrable distribution on a stack $\astack$ is given by a map $\astack\rightarrow\stadis$.

\smallskip

Let us look at $\map{\quack{\spec[\dera{R}]}{\algr}}{\stadis}$. As before, using just cofibrancy in each simplicial dimension, we find  that $$\map{\quack{\spec[\dera{R}]}{\algr}}{\stadis}\simeq\holim[j\geq 0]\stadis\lp\spec[\dera{R}]\times\lp\algr^{\times^j}\rp\rp.$$ We would like to go further and claim that the latter simplicial set is weakly equivalent to 
	\begin{equation}\label{points}\map{\stasi}{\lf\stadis\lp\spec[\dera{R}]\times\lp\algr^{\times^j}\rp\rp\rf_{j\geq 0}}.\hide{\footnote{Assuming that $\spec[\dera{R}]$ is dg affine manifold implies that the graded mixed algebra $\derham[\bullet]{\dera{R}}$ is in the correct weak equivalence class, and hence the resulting $\alldig{\spec[\dera{R}]}$ has the correct homotopy type.}}\end{equation} 
For this claim to be true, it is enough to require that $\lf\stadis\lp\spec[\dera{R}]\times\lp\algr^{\times^j}\rp\rp\rf_{j\geq 0}$ is fibrant in the Reedy model structure on the category of cosimplicial diagrams of simplicial sets. It is easier to check this property, if we work with cosimplicial diagrams of $\infty$-groupoids instead. 

The $j$-th matching object is the $\infty$-groupoid of integrable distributions on the union of images of degeneracies in $\spec[\dera{R}]\times\lp\algr^{\times^j}\rp$. Then the Reedy fibrancy condition amounts to: $\forall j\geq 0$ the functor from $\alldig{\spec[\dera{R}]\times\lp\algr^{\times^j}\rp}$ to the $j$-th matching groupoid is a fibration of $\sset$-categories, i.e.\@ it consists of fibrations between mapping spaces and it lifts weak equivalences (\cite{Bergner} p.\@ 2044). Choosing fibrant replacements of $\spec[\dera{R}]$ and $\algr$, so that all algebras of functions are almost free, and requiring all integrable distributions to be represented by almost free morphisms of graded mixed algebras, it is straightforward to see that this fibrancy condition is satisfied.

Using (\ref{points}) to provide an explicit description of integrable distributions on $\quack{\spec[\dera{R}]}{\algr}$ we obtain that a $0$-simplex in this simplicial set can be described as an integrable distribution on $\spec[\dera{R}]$ and a coherent system of weak equivalences between all possible pullbacks of this distribution to $\lf\spec[\dera{R}]\times\lp\algr^{\times^j}\rp\rf_{j\geq 1}$ (a similar definition in the non-derived context is given in \cite{Waldron} \S5.2). 

\smallskip

Given a not necessarily affine dg manifold $\dgsch$ with an action of $\algr$, s.t.\@ $\dgsch$ has an atlas consisting of $\algr$-invariant dg affine manifolds, and $\dgsch[0]$ admits a good quotient, we would like to have a description of the simplicial set $\map{\quack{\dgsch}{\algr}}{\stadis}$. For each $\algr$-invariant chart $\spec[\dera{R}]$ on $\dgsch$  we have the simplicial set of integrable distributions on $\quack{\spec[\dera{R}]}{\algr}$, and as in \cite{BSY} Def.\@ 7 we can consider equivalence classes of integrable distributions on arbitrary parts of $\algr$-invariant affine atlases on $\dgsch$ (this construction involves the entire space (\ref{points}), not just its $0$-simplices). This gives us a sheaf $\dishe[\algr]{\dgsch}$ on the space $\quospa$ of classical points in $\quack{\dgsch}{\algr}$, sections of which correspond to equivalence classes of integrable distributions, and we have
	\begin{equation}\Gamma\lp\quospa,\dishe[\algr]{\dgsch}\rp\cong\pi_0\lp\map{\quack{\dgsch}{\algr}}{\stadis}\rp.\end{equation}
As in \cite{BSY} we would like to restrict our attention to integrable distributions that do not have non-trivial ``isotropy dg Lie subalgebras''. Precisely we call an integrable distribution on $\quack{\spec[\dera{R}]}{\algr}$ \emph{a derived foliation}, if around every $\mathbb C$-point of $\quack{\spec[\dera{R}]}{\algr}$ there is a minimal\footnote{An affine dg manifold is \emph{minimal} at a point, if the complex of K\"ahler differentials has $0$ differential at this point.}  $\algr$-invariant chart $\spec[\dera{R_1}]$, where the distribution can be written as a quotient $\coanchor_1\colon\derham[\bullet]{\dera{R_1}}\twoheadrightarrow\gramial$ (\cite{BSY} Def.\@ 6), or in other words the anchor map is injective. Notice that we require injectivity of the anchor in all degrees, not just in degree $0$ as for rigid distributions in \cite{AlgFoliations} Def.\@ 1.2.6. See also the complementary notion of a co-foliation in \cite{cofoliations} \S3.

\hide{\begin{remark} We observe that the notion of a derived foliation is independent of a presentation of the integrable distribution, i.e.\@ it is a condition on the set of weak equivalence classes of distributions, defining a full $\infty$-subgroupoid $\fol{\spec[\dera{R}]}\subseteq\stadis\lp\spec[\dera{R}]\rp$ (or equivalently the union of a set of connected components in the simplicial set). Since $\stadis\lp\quack{\spec[\dera{R}]}{\algr}\rp$ can be described as a full $\infty$-subgroupoid of $\stadis\lp\spec[\dera{R}]\rp$ equipped with extra structure, taking the pullback we obtain a full $\infty$-subgroupoid $\fol{\quack{\spec[\dera{R}]}{\algr}}\subseteq\stadis\lp\quack{\spec[\dera{R}]}{\algr}\rp$.\end{remark}}

Having injective anchor maps allows us to consider a strictification of the notion of $\algr$-invariance of integrable distributions. We will say that a derived foliation on $\quack{\spec[\dera{R}]}{\algr}$ is \emph{a strictly $\algr$-invariant derived foliation}, if:\begin{enumerate}
	\item this integrable distribution can be written on $\spec[\dera{R}]$ itself using an injective anchor,
	\item the two pullbacks to $\spec[\dera{R}]\times\algr$ are canonically isomorphic, i.e.\@ the corresponding ideals in $\derham[\bullet]{\spec[\dera{R}]\times\algr}$ are equal, and
	\item the coherent system of weak equivalences between the various (equal) pullbacks of this integrable distribution to $\lf\spec[\dera{R}]\times\lp\algr^{\times^j}\rp\rf_{j\geq 1}$ are the identities.\end{enumerate}
Let $\fol[\algr]{\dgsch}\subseteq\dishe[\algr]{\dgsch}$ be the subsheaf consisting of equivalence classes of integrable distributions that Zariski locally on $\dgsch$ can be written as strictly $\algr$-invariant derived foliations. If $\algr$ is trivial, i.e.\@ we have just a dg manifold $\dgsch$, we will write $\fol{\dgsch}$ for the resulting sheaf.

\medskip

Now we turn to isotropic structures on integrable distributions. An integrable distribution on $\spec[\dera{R}]$ is defined as a morphism of graded mixed algebras $\coanchor\colon\derham[\bullet]{\spec[\dera{R}]}\rightarrow\gramial$, thus each one gives us a cochain complex $\negacy{\shi}\lp\gramial\rp$ and, since weak equivalences between integrable distributions produce weak equivalences between the corresponding negative cyclic complexes we have a fibration of simplicial sets $\negdi\lp\spec[\dera{R}]\rp\rightarrow\stadis\lp\spec[\dera{R}]\rp$ with fibers being the negative cyclic complexes.

Applying $\negacy{\shi}$ to $\coanchor$ we obtain a morphism $\hoclo{\shi}\lp\spec[\dera{R}]\rp\times\stadis\lp\spec[\dera{R}]\rp\rightarrow\negdi\lp\spec[\dera{R}]\rp$ in the category of simplicial sets over $\stadis\lp\spec[\dera{R}]\rp$, and it is clearly functorial in $\spec[\dera{R}]$, i.e.\@ we have a natural transformation $\hoclo{\shi}\times\stadis\rightarrow\negdi$ over $\stadis$. Evaluating this at the simplicial diagram $\lf\spec[\dera{R}]\times\lp\algr^{\times^j}\rp\rf_{j\geq 0}$ we get the corresponding cosimplicial diagram of maps of simplicial sets. Now suppose we have a choice $\derham[\bullet]{\spec[\dera{R}]}\rightarrow\gramial$ of a strictly $\algr$-invariant derived foliation on $\quack{\spec[\dera{R}]}{\algr}$, this gives us a map from the \emph{constant} cosimplicial-simplicial set on one point to $\lf\stadis\lp\spec[\dera{R}]\times\lp\algr^{\times^j}\rp\rp\rf_{j\geq 0}$. Taking the fiber over this point of  $\hoclo{\shi}\times\stadis\rightarrow\negdi$ we obtain a morphism of cosimplicial-simplicial objects in $\vect$:
	\begin{equation}\label{manyan}\forall j\geq 0\quad\hoclo{\shi}\lp\spec[\dera{R}]\times\lp\algr^{\times^j}\rp\rp\longrightarrow\negacy{\shi}\lp\pi_j^*\lp\gramial\rp\rp,\end{equation}
where $\pi_j\colon\spec[\dera{R}]\times\lp\algr^{\times^j}\rp$ is the projection. A homotopically closed $2$-form of degree $\shi$ on $\quack{\spec[\dera{R}]}{\algr}$ is given by a map $\stasi\rightarrow\lf\hoclo{\shi}\lp\spec[\dera{R}]\times\lp\algr^{\times^j}\rp\rp\rf_{j\geq 0}$, and composing with (\ref{manyan}) we obtain $\eva\colon\stasi\rightarrow\lf\negacy{\shi}\lp\pi_j^*\lp\gramial\rp\rp\rf_{j\geq 0}$. On the other hand there is the canonical $0$-map. Then \emph{an isotropic structure on $\gramial$} is defined as a homotopy 
	\begin{equation}\label{homotopy}\stasi[1]\times\stasi\longrightarrow\lf\negacy{\shi}\lp\pi_j^*\lp\gramial\rp\rp\rf_{j\geq 0}\end{equation}
between $\eva$ and the $0$-map.\footnote{Recall that $\stasi=\lf\stasi[j]\rf_{j\geq 0}$ is the cosimplicial diagram of the standard simplices in $\sset$.}With obvious modifications this construction can be used for integrable distributions on $\quack{\spec[\dera{R}]}{\algr}$ that are not strictly $\algr$-invariant derived foliations. We have considered only the special case because then we can write a (\ref{homotopy}) explicitly.

Recall that we can use cosimplicial-simplicial normalization to transfer computation of mapping spaces from cosimplicial-simplicial objects in $\vect$ to graded mixed complexes. This transfer turns $\stasic$ into a cofibrant replacement of $\mathbb C$ considered to be in degree 0 and weight $0$. It is easy to see that normalization of $\stasic[1]\times\stasic$ is weakly equivalent to the graded mixed complex that is a cofibrant replacement of the complex $\mathbb C\hookrightarrow\mathbb C\oplus\mathbb C$ placed in weight $0$ and degrees $-1$ and $0$. This gives us the following description of isotropic structures on strictly $\algr$-invariant derived foliations.

Let $\symstra=\underset{i,j\geq 0}\sum\sara^j\para^i\nform[j]{2+i}$ be a homotopically closed $2$-form of degree $\shi$ on $\quack{\spec[\dera{R}]}{\algr}$. A strictly $\algr$-invariant derived foliation $\coanchor\colon\derham[\bullet]{\spec[\dera{R}]}\rightarrow\gramial$ is \emph{isotropic with respect to $\symstra$} if $\coanchor\lp\underset{i\geq 0}\sum\para^i\formstra[0]{i}\rp$ is a $\homdi+\para\dedi$-coboundary. \emph{An isotropic structure} on a strictly $\algr$-invariant isotropic derived foliation is given by $\lagra:=\underset{i,j\geq 0}\sum\sara^j\para^i\lagrast[j]{i}\in\negacy{\bullet}\lp\gramial\rp$ s.t.\@
	\begin{equation}\lp\homdi+\para\dedi+\sara\stadi\rp\lp\underset{i\geq 0}\sum\para^i\lagrast[j]{i}\rp=\coanchor\lp\underset{i\geq 0}\sum\para^i\nform[0]{i}\rp.\end{equation} 
Notice that we require (up to homotopy) vanishing of $\symstra$ only on the derived foliation on $\spec[\dera{R}]$ and say nothing about $\underset{i\geq 0}\sum\sara^j\para^i\formstra[j]{2+i}$ for $j>0$. This is a consequence of the normalization of $\stasic[1]\times\stasic$ being so simple. We would also like to define \emph{strictly $\algr$-invariant isotropic structures} on strictly $\algr$-invariant derived foliations as those isotropic structures that consist of $\underset{i\geq 0}\sum\para^i\lagrast{i}$ only, in which case the two pullbacks of $\underset{i\geq 0}\sum\para^i\lagrast{i}$ to $\spec[\dera{R}]\times\algr$ have to be equal.\footnote{Having required that the derived foliation is strictly $\algr$-invariant we can directly compare the two pullbacks of isotropic structures.} 

Being an isotropic distribution is clearly a local condition so given a homotopically closed $2$-form $\symstra$ we define a subsheaf $\dish[\algr]{\dgsch}{\symstra}\subseteq\fol[\algr]{\dgsch}$ consisting of sections whose corresponding foliations are isotropic with respect to $\symstra$. Choosing isotropic structures on isotropic foliations gives us another sheaf $\sish[\algr]{\dgsch}{\symstra}$ with a forgetful map $\sish[\algr]{\dgsch}{\symstra}\rightarrow\dish[\algr]{\dgsch}{\symstra}$. 

Now suppose that $\symstra$ is symplectic. For an isotropic structure to be Lagrangian with respect to $\symstra$ is a local condition, so we can restrict to $\quack{\spec[\dera{R}]}{\algr}$. There we have the tangent complex $\tanga{\quack{\spec[\dera{R}]}{\algr}}$ that is a dg $\dera{R}$-module concentrated in degrees $\geq -1$. For a strictly $\algr$-invariant derived foliation $\lmin\hookrightarrow\tanga{\spec[\dera{R}]}$ an isotropic structure $\lagra$ is \emph{Lagrangian}, if it defines a (shifted) weak equivalence between the homotopy kernel of the composite morphism 
\begin{equation}\lmin\hookrightarrow\tanga{\spec[\dera{R}]}\rightarrow\tanga{\quack{\spec[\dera{R}]}{\algr}}\end{equation}
and the dual of $\lmin$. For details see e.g.\@ \cite{BSY} Def.\@ 12. We denote by $\lish[\algr]{\dgsch}{\symstra}\subseteq\sish[\algr]{\dgsch}{\symstra}$ the subsheaf consisting of isotropic distributions and isotropic structures that are Lagrangian.

\medskip

We finish this section with some statements that follow from definitions and Prop.\@ \ref{reductive}.

\begin{proposition}\label{trivial} Suppose $\spec[\dera{R}]\cong\spec[\dera{B}]\times\lp\subg\setminus\algr\rp$ for a closed subgroup $\subg$ and the action of $\algr$ on $\spec[\dera{R}]$ is through $\subg\setminus\algr$. Then the pullback functor over $\spec[\dera{R}]\twoheadrightarrow\spec[\dera{B}]$ identifies the following sheaves on the space $\quobit$ of classical points in $\quack{\spec[\dera{R}]}{\algr}$: the sheaf $\fol[\algr]{\spec[\dera{R}]}$ of equivalence classes of strictly $\algr$-invariant derived foliations on $\spec[\dera{R}]$ and the sheaf $\fol{\spec[\dera{B}]}$ of equivalence classes of derived foliations on $\spec[\dera{B}]$.

Suppose that $\algr$ is linearly reductive, then any homotopically closed $2$-form on $\quack{\spec[\dera{R}]}{\algr}$ can be written as a strictly $\algr$-invariant $\underset{i,j\geq 0}\sum\sara^j\para^i\formstra[j]{i}$, where $\underset{i\geq 0}\sum\para^i\formstra[0]{i}$ is pulled back from $\spec[\dera{B}]$. This pullback functor also identifies the following sheaves on the space $\quobit$:\begin{enumerate}[label=(\roman*)]
	\item the sheaf $\dish[\algr]{\spec[\dera{R}]}{\symstra}$ of equivalence classes of strictly $\algr$-invariant isotropic derived foliations on $\spec[\dera{R}]$ and the sheaf $\dish{\spec[\dera{B}]}{\symstra}$ of equivalence classes of isotropic derived foliations on $\spec[\dera{B}]$,
	\item the sheaf $\sish[\algr]{\spec[\dera{R}]}{\symstra}$ of equivalence classes of strictly $\algr$-invariant isotropic derived foliations on $\spec[\dera{R}]$ equipped with strictly $\algr$-invariant isotropic structures and the sheaf $\sish{\spec[\dera{B}]}{\symstra}$ of equivalence classes of isotropic derived foliations on $\spec[\dera{B}]$ equipped with isotropic structures.
\end{enumerate}\end{proposition}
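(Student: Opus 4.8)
The plan is to exploit the product structure together with the transitivity of the $\algr$-action along the fibres of the projection $\pi\colon\spec[\dera{R}]\cong\spec[\dera{B}]\times(\subg\setminus\algr)\twoheadrightarrow\spec[\dera{B}]$. Since $\algr$ acts only through $\subg\setminus\algr$ and this action is transitive on each fibre, the infinitesimal action $\allie\to\tanga{\spec[\dera{R}]}$ (with $\allie$ the trivial bundle on $\spec[\dera{R}]$ with fibre the Lie algebra of $\algr$, as in (\ref{bicomplex})) has image the relative tangent complex $\tanga{\spec[\dera{R}]/\spec[\dera{B}]}=\pi^*\tanga{\subg\setminus\algr}$, and kernel the stabiliser bundle with fibre $\mathrm{Lie}(\subg)$. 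Consequently the tangent complex of the quotient stack, which is the total complex of $\allie\to\tanga{\spec[\dera{R}]}$ with $\allie$ placed in degree $-1$, is canonically equivalent to $\pi^*\tanga{\spec[\dera{B}]}\oplus\mathrm{Lie}(\subg)[1]$; in particular its only negative part, sitting in degree $-1$, is the stabiliser direction, while its non-negative truncation is $\pi^*\tanga{\spec[\dera{B}]}$.

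With this in hand the identification of foliations is immediate and uses no reductivity. A strictly $\algr$-invariant derived foliation is presented on $\spec[\dera{R}]$ by an injective anchor $\lmin\hookrightarrow\tanga{\spec[\dera{R}]}$ whose two pullbacks to $\spec[\dera{R}]\times\algr$ coincide, and as a distribution on the quotient stack it has injective anchor into $\tanga{\quack{\spec[\dera{R}]}{\algr}}$. Because $\lmin$ is generated in non-negative degrees while the orbit directions of the quotient tangent complex sit in degree $-1$, injectivity on the quotient forces $\lmin$ to embed into the non-negative truncation $\pi^*\tanga{\spec[\dera{B}]}$; that is, the distribution avoids the relative tangent directions and is pulled back along $\pi$ from an integrable distribution on $\spec[\dera{B}]$. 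Conversely any derived foliation on $\spec[\dera{B}]$ pulls back to one on $\spec[\dera{R}]$, and strict $\algr$-invariance is automatic: the action being through $\subg\setminus\algr$ and trivial on $\spec[\dera{B}]$, the two pullbacks of a $\pi$-pulled-back distribution to $\spec[\dera{R}]\times\algr$ are literally equal and the higher coherences are identities. This gives a bijection of sections, functorial in the chart, hence an isomorphism of sheaves $\fol[\algr]{\spec[\dera{R}]}\cong\fol{\spec[\dera{B}]}$ on $\quobit$.

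For the remaining statements I would first record the product splitting of graded mixed algebras $\derham[\bullet]{\dera{R}}\cong\derham[\bullet]{\dera{B}}\otimes\derham[\bullet]{\subg\setminus\algr}$, compatible with the $\algr$-action (trivial on the first factor, right-regular on the second). Assuming $\algr$ linearly reductive, Prop.\@ \ref{reductive} already presents any homotopically closed $2$-form on $\quack{\spec[\dera{R}]}{\algr}$ by a strictly invariant $\sum\sara^j\para^i\nform[j]{2+i}$ whose $j=0$ part $\sum_i\para^i\nform[0]{i}$ is an $\algr$-invariant form on $\spec[\dera{R}]$. Under the splitting this invariant part lands in $\derham[\bullet]{\dera{B}}\otimes(\derham[\bullet]{\subg\setminus\algr})^{\algr}$, and the right-$\algr$-invariant forms on $\subg\setminus\algr$ form the Chevalley--Eilenberg complex on $(\allie/\mathrm{Lie}(\subg))^*$. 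I claim the positive-exterior-degree part of this factor can be traded, via the $\sara\stadi$-differential, against terms with $j\geq 1$, so that the $j=0$ part may be chosen in $\derham[\bullet]{\dera{B}}$, i.e.\@ pulled back from $\spec[\dera{B}]$; this is an acyclicity statement for a reduced bar/Chevalley--Eilenberg complex of the kind already exploited through the Acyclic Assembly Lemma (\cite{Wei} Lemma 2.7.3) in the proofs above. Granting this, the isotropy condition ``$\coanchor(\sum_i\para^i\formstra[0]{i})$ is a $\homdi+\para\dedi$-coboundary'' involves only the $j=0$ part and the $\pi$-pulled-back foliation, so it is exactly the isotropy condition on $\spec[\dera{B}]$; combined with the foliation identification this yields (i), namely $\dish[\algr]{\spec[\dera{R}]}{\symstra}\cong\dish{\spec[\dera{B}]}{\symstra}$.

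Finally, a strictly $\algr$-invariant isotropic structure consists of $\sum_i\para^i\lagrast{i}\in\negacy{\bullet}(\gramial)$ with equal pullbacks to $\spec[\dera{R}]\times\algr$; by the same invariance-and-transitivity argument these are pulled back from $\spec[\dera{B}]$, and since no group acts on $\spec[\dera{B}]$ the term $\sara\stadi$ drops out of the defining relation, leaving precisely the isotropic-structure equation $(\homdi+\para\dedi)(\sum_i\para^i\lagrast{i})=\coanchor(\sum_i\para^i\nform[0]{i})$ on $\spec[\dera{B}]$. This establishes (ii), that is $\sish[\algr]{\spec[\dera{R}]}{\symstra}\cong\sish{\spec[\dera{B}]}{\symstra}$. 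The one genuinely technical point, and the step I expect to be the main obstacle, is the reduction in the previous paragraph: showing that the higher-exterior-degree invariant forms along the homogeneous-space direction carry no new cohomology and can be absorbed into the simplicial ($j\geq 1$) directions, for which reductivity of $\algr$ and the acyclicity argument are essential. Everything else is a transcription of invariance together with the degree bookkeeping of the tangent complex.
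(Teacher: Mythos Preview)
Your argument for identifying the foliation sheaves contains a genuine gap. You assert that a strictly $\algr$-invariant derived foliation, viewed on the quotient stack, has injective anchor into $\tanga{\quack{\spec[\dera{R}]}{\algr}}$, and then use a degree count to conclude that $\lmin$ embeds into $\pi^*\tanga{\spec[\dera{B}]}$ and ``avoids the relative tangent directions''. But the paper's definition only requires the anchor to be injective on $\spec[\dera{R}]$ itself; nothing forces injectivity after composing with $\tanga{\spec[\dera{R}]}\to\tanga{\quack{\spec[\dera{R}]}{\algr}}$. In fact the correct conclusion is the opposite: every strictly $\algr$-invariant derived foliation necessarily \emph{contains} the tangent directions along $\subg\setminus\algr$. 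A one-line sanity check already rules out your picture: take $\lmin=\tanga{\spec[\dera{R}]}$, which is strictly $\algr$-invariant but certainly does not sit inside $\pi^*\tanga{\spec[\dera{B}]}$.

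The paper argues in the dual (coanchor) picture. On a minimal chart write the foliation as a surjection $\coanchor\colon\derham[\bullet]{\dera{R}}\twoheadrightarrow\gramial$; strict invariance says the two pullbacks of $\Ker{\coanchor}$ to $\spec[\dera{R}]\times\algr$ generate the same ideal. From the splitting $\derham{\dera{R}}\cong\derham{\dera{B}}\boxplus\derham{\subg\setminus\algr}$ one checks that the two pullbacks of the summand $\derham{\subg\setminus\algr}$ to $\derham{\spec[\dera{R}]\times\algr}$ have trivial intersection (the action pullback acquires a nonzero $\derham{\algr}$-component), forcing $\Ker{\coanchor}\cap\derham{\subg\setminus\algr}=\lf 0\rf$. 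Dually $\lmin$ contains all of $\tanga{\subg\setminus\algr}$ and hence splits as this vertical piece plus a foliation on $\spec[\dera{B}]$; that is exactly what it means for $\lmin$ to be the pullback (in the sense of \cite{HM}, which is the ``pullback functor'' of the statement) of a derived foliation on $\spec[\dera{B}]$. Once this is corrected your treatment of the closed form and of (i)--(ii) is in the right spirit, though the paper regards those parts as immediate from the definitions and Prop.~\ref{reductive} and does not go through the Chevalley--Eilenberg reduction you sketch.
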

\hide{\begin{proof} Since the projection $\spec[\dera{R}]\twoheadrightarrow\spec[\dera{B}]$ has a section\hide{ (e.g.\@ given by the class of $\subg$)} clearly $\fol{\spec[\dera{B}]}$ is a subsheaf of $\fol[\algr]{\spec[\dera{R}]}$. To prove that this inclusion is an isomorphism of sheaves we work locally and assume that $\spec[\dera{B}]$ is minimal at some $\pt\in\quobit$. Then a section of $\fol[\algr]{\spec[\dera{R}]}$ around $\pt$ is represented by a surjective $\coanchor\colon\derham[\bullet]{\dera{R}}\twoheadrightarrow\gramial$ whose two pullbacks to $\derham[\bullet]{\spec[\dera{R}]\times\algr}$ generate the same ideal. We have $\derham{\dera{R}}\cong\derham{\dera{B}}\boxplus\derham{\subg\setminus\algr}$ and the two pullbacks of $\derham{\subg\setminus\algr}$ to $\derham{\spec[\dera{R}]\times\algr}$ have trivial intersection. Therefore
	\begin{equation}\Ker{\coanchor}\cap\derham{\subg\setminus\algr}=\lf 0\rf,\end{equation}
	in other words any strictly $\algr$-invariant derived foliation has to contain the tangent vectors to the orbits of $\algr$. Thus we see that $\lmin\hookrightarrow\tanga{\spec[\dera{R}]}$ given by $\coanchor$ is a direct sum of $\tanga{\subg\setminus\algr}$ and a derived foliation on $\spec[\dera{B}]$. So $\fol{\spec[\dera{B}]}\hookrightarrow\fol[\algr]{\spec[\dera{R}]}$ is surjective.\end{proof}}

\section{Lagrangian distributions in the $-2$-shifted case}\label{three}

In this section we come back to the particular case of quotient stacks of $\quot$-schemes. First we assume that $\CY$ is a Calabi--Yau manifold of dimension $2-\shi$ for some $\shi\in\mathbb Z_{<0}$. According to \cite{PTVV13} \S2.1 the moduli stack of perfect complexes on $\CY$, as defined in \cite{ToVa}, carries a $\shi$-shifted symplectic structure. As it is shown in \cite{BKS} there is a formally \'etale morphism to this stack from $\quack{\derqof{\twi{\sumto{\twish{\CY}}{\dimglo}}{-\casmu}}{\hilpo}{\lebo,\ribo}}{\GL{\dimglo}}$, thus we can pull back the symplectic structure to the latter stack. Restricting to the stable part we obtain a $\shi$-shifted symplectic structure $\symstra$ on $\quack{\staqof{\twi{\sumto{\twish{\CY}}{\dimglo}}{-\casmu}}{\hilpo}{\lebo,\ribo}}{\GL{\dimglo}}$. In this section the data $\CY$, $\hilpo$, $\lebo$, $\ribo$, $\casmu$ and $\dimglo$ will remain constant, so for typographical reasons we suppress them from the notation and write simply $\quack{\derquot}{\GL{\dimglo}}$.

Our goal in this paper is to show existence of a special kind of globally defined Lagrangian distributions on $\quack{\derquot}{\GL{\dimglo}}$. For that we restrict our attention to Calabi--Yau manifolds of dimension $4$, i.e.\@ $\shi=-2$. In addition we will switch from the algebraic geometry over $\mathbb C$ to $\cinfty$-geometry over $\mathbb R$. All of our dg manifolds become then derived $\cinfty$-manifolds, and we will denote this change by underlining. In particular we will write $\demquot$ for the derived $\cinfty$-manifold underlying $\derquot$. We will also use $\smoup$ to denote the Lie group of invertible $\dimglo\times\dimglo$-matrices with coefficients in $\mathbb C$, similarly for $\spoup$.

Switching to $\cinfty$-geometry changes the topology. We will use the usual metric topology on all of our $\cinfty$-manifolds. Since it is stronger than the \'etale topology, Proposition \ref{LocalProduct} translates into a proposition stating that $\demquot$ is a principal $\spoup$-bundle over the quotient
\begin{equation}\quo{\demquot}{\spoup}.\footnote{As before the symbol $\sslash$ means taking functions that are invariant with respect to the group action.}\end{equation}

Switching from $\mathbb C$ to $\mathbb R$ breaks our shifted symplectic structures into the real and imaginary parts: $\refo{\symstra}$, $\imfo{\symstra}$. As in \cite{BSY} \S4 both parts will play a role. We will seek Lagrangian distributions with respect to $\imfo{\symstra}$, that are also negative definite with respect to $\refo{\symstra}$. To explain the second condition we need to put some additional cohomological restrictions on our integrable distributions. 

As in \cite{BSY} Def.\@ 15 on p.\@ 23 we will say that an integrable distribution $\anchor\colon\lmin\rightarrow\tanga{\derma}$ on a derived $\cinfty$-manifold $\derma$ is \emph{a purely derived foliation} if $H^{\leq 0}\lp\lmin,\homdi\rp=0$ and on a minimal chart around every classical point in $\derma$ a representative of $\lp\lmin,\anchor\rp$ can be chosen s.t.\@ $\anchor$ is an inclusion of complexes (see \cite{BSY} Rem.\@ 5 p.\@ 14). If there is a Lie group acting on $\derma$ we will consider strictly invariant purely derived foliations on $\derma$.

Now a Lagrangian distribution with respect to $\imfo{\symstra}$ is \emph{negative definite} with respect to $\refo{\symstra}$, if $\refo{\symfo}$ defines a negative definite $2$-form on $H^1\lp\lmin,\homdi\rp$ (\cite{BSY} Def.\@ 20 p.\@ 30). This condition is clearly invariant with respect to equivalences of integrable distributions and it is local on the space $\quospa$ of classical points in $\quack{\demquot}{\smoup}$. Hence it defines a subsheaf of the sheaf of all Lagrangian distributions with respect to $\imfo{\symstra}$, that we denote by
	\begin{equation}\nish[\smoup]{\demquot}{{\symstra}}\hooklongrightarrow\lish[\smoup]{\demquot}{\imfo{\symstra}}.\end{equation}
We would like to show that this subsheaf has at least one global section. This immediately follows from the following
\begin{proposition}\label{existence} The sheaf $\nish[\smoup]{\demquot}{{\symstra}}$ on $\quospa$ is soft and the set of germs around every point in $\quospa$ is not empty.\end{proposition}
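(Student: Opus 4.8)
The plan is to transport both assertions to the corresponding statements for derived $C^\infty$-manifolds proved in \cite{BSY}, using the principal bundle structure of the stable locus. By the $C^\infty$-version of Proposition \ref{LocalProduct} the projection $\demquot\to\quo{\demquot}{\spoup}$ is a principal $\spoup$-bundle, and in the metric topology it is locally trivial. Hence every point of $\quospa$ admits an affine chart $\spec[\dera{B}]$ of $\quo{\demquot}{\spoup}$ over which the pullback of $\demquot$ is isomorphic to $\spec[\dera{B}]\times\spoup$, with $\smoup$ acting through $\spoup=\mathbb C^*\setminus\smoup$ by right translation on the second factor. This is precisely the situation of Proposition \ref{trivial} with $\algr=\smoup$ and $\subg=\mathbb C^*$ the centre, so that $\subg\setminus\algr=\spoup$. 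The pullback functor along $\spec[\dera{B}]\times\spoup\twoheadrightarrow\spec[\dera{B}]$ then identifies strictly $\smoup$-invariant purely derived foliations with their isotropic structures on the quotient stack with ordinary derived foliations and isotropic structures on $\spec[\dera{B}]$, and writes the $j=0$ part of $\symstra$ as a form pulled back from $\spec[\dera{B}]$. As in the proof of Proposition \ref{trivial}, any strictly $\smoup$-invariant foliation contains the orbit directions $\tanga{\spoup}$, which sit in degree $0$ and therefore do not affect $H^1\lp\lmin,\homdi\rp$; hence both the Lagrangian condition and the negative-definiteness condition are preserved by the pullback, and Proposition \ref{trivial} extends to an identification of $\nish[\smoup]{\demquot}{\symstra}$, restricted to $\spec[\dera{B}]$, with the sheaf $\nish{\spec[\dera{B}]}{\symstra}$ of negative-definite Lagrangian distributions on the derived scheme $\spec[\dera{B}]$ treated in \cite{BSY}.

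Granting this local identification, non-emptiness of the stalks is immediate. Fixing a point and a chart $\spec[\dera{B}]$ as above, the descended $j=0$ part of $\symstra$ is a $-2$-shifted symplectic structure on the dg manifold $\spec[\dera{B}]$. By the local Darboux theorem of \cite{BBBJ} together with the linear-algebraic construction of \cite{BSY} --- equivalently, by the global existence result of \cite{BSY} applied to the single scheme $\spec[\dera{B}]$ --- there is a Lagrangian distribution on $\spec[\dera{B}]$ with respect to $\imfo{\symstra}$ that is negative definite with respect to $\refo{\symstra}$. Its pullback along $\spec[\dera{B}]\times\spoup\twoheadrightarrow\spec[\dera{B}]$ is a strictly $\smoup$-invariant distribution of the same kind, hence a germ of $\nish[\smoup]{\demquot}{\symstra}$ at the chosen point.

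The substance of the proposition is softness. First I would record that $\quospa$, in the metric topology, is paracompact and Hausdorff, so that $C^\infty$-partitions of unity subordinate to any locally finite open cover are available. Over each chart the sheaf is the soft sheaf of \cite{BSY}; there softness rests on the fact that the admissible negative-definite Lagrangian structures form, over each chart, a \emph{convex} (indeed contractible) space of sections --- negative-definite forms on $H^1\lp\lmin,\homdi\rp$ constitute an open convex cone, and the defining data vary in an affine space over the $C^\infty$-functions --- so that two local choices can be combined by a partition-of-unity--weighted convex combination while remaining negative-definite Lagrangian. To globalise I would take a section over a closed $Z\subseteq\quospa$, extend it over each member of a locally finite cover by such charts using local softness, and patch the extensions by a subordinate partition of unity; the convexity just recalled guarantees that the weighted combination is again a negative-definite Lagrangian distribution that still restricts to the given section on $Z$, which is exactly the required global extension.

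The step I expect to be the genuine obstacle is verifying that this convex-combination procedure is compatible with the reductions through Proposition \ref{trivial}. On an overlap of two charts one compares two local trivialisations of the $\spoup$-bundle and two descents of the $j=0$ part of $\symstra$, and one must check that a convex combination carried out downstairs on the bases corresponds to a well-defined strictly $\smoup$-invariant distribution upstairs, with negative-definiteness on $H^1\lp\lmin,\homdi\rp$ preserved throughout. Once this compatibility is secured, everything else is assembly of the cited results, the geometric content being entirely contained in the local softness established in \cite{BSY}.
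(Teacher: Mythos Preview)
Your reduction via Proposition~\ref{trivial} to a derived $C^\infty$-manifold $\spec[\dera{B}]$ (the local $\spoup$-slice) is the same first move the paper makes, but the next sentence contains a genuine error: you assert that ``the descended $j=0$ part of $\symstra$ is a $-2$-shifted symplectic structure on the dg manifold $\spec[\dera{B}]$''. It is not. The quotient stack $\quack{\spec[\dera{B}]\times\spoup}{\smoup}$ is not $\spec[\dera{B}]$ but $\spec[\dera{B}]\times B\smgl{1}$ (the $\smoup$-action factors through $\spoup$, leaving a residual trivial $\smgl{1}$). Consequently the tangent complex of $\spec[\dera{B}]$ has a nontrivial piece in degree~$3$ (dual under Serre duality to $\mathrm{Ext}^0\cong\gl{1}$ sitting in degree~$-1$ \emph{on the stack}), and there is nothing in degree~$-1$ on $\spec[\dera{B}]$ for $\symstra$ to pair it with. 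The paper states this explicitly: ``because of non-triviality of $H^{3}\lp\tanga{\derna},\homdi\rp$ the derived $\cinfty$-manifold $\derna$ cannot carry a $-2$-shifted symplectic structure''. So you cannot invoke \cite{BSY} on $\spec[\dera{B}]$ directly, and both your non-emptiness and softness arguments collapse at this point.

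The missing idea is a second reduction, from $\derna=\spec[\dera{B}]$ to a genuinely $-2$-symplectic derived $\cinfty$-manifold $\deroa$. The paper obtains $\derna\to\deroa$ from the stacky Darboux theorem of \cite{BBBJ}, and then does real work: it shows that locally $\derna\cong\deroa\times\suspense[-3]{\gl{1}}$, that the pullback $\dish{\deroa}{\imfo{\symstra}}\hookrightarrow\dish{\derna}{\imfo{\symstra}}$ is an inclusion, and---crucially---that the \emph{purely derived} condition ($H^{\leq 0}(\lmin)=0$) forces any section of $\nish{\derna}{\symstra}$ to contain the degree-$3$ vector fields along $\suspense[-3]{\gl{1}}$, hence to lie in the image of this inclusion. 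Only after identifying $\nish{\derna}{\symstra}\cong\nish{\deroa}{\symstra}$ can one cite \cite{BSY}~Thm.~3 for softness and local existence. Your proposal skips this entire layer; the convex-combination sketch for softness is also unnecessary once the reduction to $\deroa$ is in place, since softness is local and \cite{BSY} already provides it there.
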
 	
\begin{proof} Since the topological space $\quospa$ is Hausdorff and second countable the property for a sheaf on $\quospa$ to be soft is local, i.e.\@ it is enough to show it in an arbitrarily small neighbourhood of every point. Let $\derma$ be a $\smoup$-invariant chart on $\demquot$ that is small enough so that we have $\derma\cong\derna\times\spoup$, and the action of $\smoup$ is through the action on $\spoup$. Let $\quobit$ be the corresponding open subset of $\quospa$. Using a $\cinfty$-reformulation of Prop.\@ \ref{trivial} we see that restriction of $\nish[\smoup]{\demquot}{\symstra}$ to $\quobit$ is isomorphic to a subsheaf $\nish{\derna}{\symstra}\subseteq\dish{\derna}{\imfo{\symstra}}$, whose sections correspond to equivalence classes of purely derived foliations with isotropic structures, having the additional properties that they are negative definite with respect to $\refo{\symstra}$ and Lagrangian when seen as strictly invariant distributions on $\quack{\derna}{\smgl{1}}$ (the action being trivial).
	
Applying Thm.\@ 2.10 in \cite{BBBJ} p.\@ 1302 before the switch to $\cinfty$-geometry, we see that we can construct a morphism $\derna\rightarrow\deroa$ of derived $\cinfty$-manifolds, s.t.\@ $\imfo{\symstra}$, $\refo{\symstra}$ are pullbacks of the imaginary and real parts of a $-2$-shifted \emph{symplectic} structure on $\deroa$.\footnote{Notice that because of non-triviality of $H^3\lp\tanga{\derna},\homdi\rp$ the derived $\cinfty$-manifold $\derna$ cannot carry a $-2$-shifted symplectic structure. Only the stack $\quack{\derna}{\smgl{1}}$ does.} The map $\derna\rightarrow\deroa$ identifies the spaces of classical points and the corresponding pullback functor gives a morphism of sheaves 
	\begin{equation}\label{BBBJ}\dish{\deroa}{\imfo{\symstra}}\longrightarrow\dish{\derna}{\imfo{\symstra}}.\end{equation} 
We claim that this is an inclusion of a subsheaf. It is enough to prove it locally, so let $\pt\in\quobit$ and we can assume that $\derna$ is minimal at $\pt$. This implies that $\imfo{\symstra}$ defines a shifted isomorphism $\tanga{\quack{\derna}{\smgl{1}}}\rightarrow\derham{\quack{\derna}{\smgl{1}}}$ in a neighbourhood of $\pt$ in $\derna[0]$. Since the action of $\smgl{1}$ is trivial, the part $\tanga[0]{\smgl{1}}\subseteq\tanga[-1]{\quack{\derna}{\smgl{1}}}$ consists of $\homdi$-cocycles. Moreover, using \cite{BBBJ} Prop.\@ 2.4 p.\@ 1296 we can choose a representative of the $-2$-shifted symplectic form on $\quack{\derna}{\smgl{1}}$ s.t.\@ the component on $\derna\times\smgl{1}$ is a closed $2$-form (of degree $-3$). This implies that the part of $\derham{\derna}$ of degree $-3$ that corresponds to $\tanga[0]{\smgl{1}}$ under the isomorphism given by $\imfo{\symstra}$ consists of $\dedi$-cocycles. Locally these are also $\dedi$-coboundaries, and hence around $\pt$  in $\derna$ we can choose $2$ functions of degree $-3$, that are $\homdi$-cocycles and whose de Rham differentials are linearly independent at $\pt$. Dividing by the ideal these functions generate we obtain a section of $\derna\twoheadrightarrow\deroa$. Thus (\ref{BBBJ}) is injective.

It is clear that (\ref{BBBJ}) cannot be surjective, since the two vector fields in degree $3$ corresponding to the two functions above generate a derived foliation that becomes $0$ when pulled back to $\deroa$. In fact locally around $\pt$ we can write $\derna\cong\deroa\times\lp\suspense[-3]{\gl{1}}\rp$, where $\suspense[-3]{\gl{1}}$ is the derived $\cinfty$-manifold whose dg $\cinfty$-ring is freely generated by two functions of degree $-3$. Obviously every integrable distribution that is pulled back over $\derna\twoheadrightarrow\deroa$ has to contain the tangent vectors along $\suspense[-3]{\gl{1}}$, and conversely every integrable distribution containing this bundle is in the pullback. Now we notice that because of the purely derived condition distributions corresponding to sections of $\nish{\derna}{\symstra}$ have this property, i.e.\@ $\nish{\derna}{\symstra}\subseteq\dish{\deroa}{\imfo{\symstra}}$. We can recognize this subsheaf of $\dish{\deroa}{\imfo{\symstra}}$ as the sheaf $\nish{\deroa}{\symstra}$, whose sections correspond to purely derived foliations on $\deroa$, that are Lagrangian with respect to $\imfo{\symstra}$ and negative definite with respect to $\refo{\symstra}$. In \cite{BSY} Thm.\@ 3 this sheaf is shown to be soft and possess local sections.\end{proof}

\medskip

Proposition \ref{existence} shows that there are globally defined purely derived foliations on the stack $\quack{\demquot}{\smoup}$ that are Lagrangian with respect to $\imfo{\symstra}$ and negative definite with respect to $\refo{\symstra}$. In general such an integrable distribution consists of distributions on $\smoup$-invariant pieces with coherent gluing data on various intersections. In this particular case our derived stack is especially nice, it is given as a quotient stack of a derived $\cinfty$-manifold by an action of a Lie group. One would expect that in such situations there should be one globally defined derived foliation on the entire derived $\cinfty$-manifold without any need for gluing.

This is indeed what happens in this case. First of all, since we are working in derived $\cinfty$-geometry all of our derived manifolds are locally fibrant, i.e.\@ locally their dg $\cinfty$-rings of functions are almost free. This implies that we can find local representatives of any derived foliation on $\demquot$ itself. Let $\derma$ be a chart on $\demquot$ where we have such a representative $\lmin$. By construction it is a purely derived foliation and we can use Prop.\@ 8 from \cite{BSY}. So we can assume that $\lmin$ is a subcomplex $\tanga{\derma}$, that has generators only  in degrees $\geq 1$ and consist of all of $\tanga{\derma}$ in degrees $\geq 2$. As in the proof of Prop.\@ 8 in \cite{BSY} we can choose a sequence $\lf E_k\rf_{1\leq k\leq r}$ of vector bundles on $\derma[0]$ that freely generate $\lmin$ as a graded module, with $E_k$ sitting in degree $k$. Moreover, we can assume that each $E_k$ is a subbundle of $F_k$, where $\lf F_k\rf_{0\leq k\leq r}$ are some generating bundles for $\tanga{\derma}$ (clearly $E_k\cong F_k$ for $k\geq 2$).

We would like to glue the patchwork of subcomplexes into one globally defined subcomplex of $\tanga{\demquot}$. This requires finding a canonical complement of $\lmin$ in $\tanga{\derma}$. First of all we note that because the Euler characteristic of $\lmin$ is half of that of $\tanga{\derma}$ the rank of $E_1$ is independent of the representative $\lmin$ we have chosen. Moreover, denoting by $Z^1$ the sub-module of $\homdi$-cocycles in $F_1$ we have a canonical complement to $E_1$ in $F_1$:
	\begin{equation}E_1':=\lp E_1\cap Z^1\rp^{\perp_{\refo{\symstra}}}\cap Z^1,\end{equation}
which is a subbundle. Therefore, given two charts $\derma$, $\derna$ and two representatives of the foliation $\lmin$, $\llin$, we have a map $\llin\twoheadrightarrow\lmin$ given by discarding the projection to $E_1'$. This gives us two morphisms $\llin\rightrightarrows\tanga{\derma\cap\derna}$ that are homotopically equivalent. Taking their difference and multiplying by coefficients from a partition of unity subordinate to the two charts we obtain one subcomplex on $\derma\cup\derna$ that represents the derived foliation. Since the entire derived scheme is second countable, continuing in this way we obtain 

\begin{proposition} For any purely derived foliation on $\quack{\demquot}{\smoup}$, that is Lagrangian with respect to $\imfo{\symstra}$ and negative definite with respect to $\refo{\symstra}$, whose existence is provided by Prop.\@ \ref{existence}, there is a representative given in terms of a globally defined subcomplex $\lmin\subseteq\tanga{\demquot}$, having generators in degrees $\geq 1$ and surjective in degrees $\geq 2$.\end{proposition}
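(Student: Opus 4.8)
The plan is to formalize the gluing argument sketched in the discussion above, passing from chart-wise subcomplex representatives of the given foliation to a single globally defined subcomplex of $\tanga{\demquot}$. First I would invoke local fibrancy of the dg $\cinfty$-rings together with \cite{BSY} Prop.\@ 8 to guarantee that on every $\smoup$-invariant chart $\derma$ of $\demquot$ the purely derived foliation provided by Prop.\@ \ref{existence} is represented by an honest subcomplex $\lmin\subseteq\tanga{\derma}$ with generators in degrees $\geq 1$ and agreeing with all of $\tanga{\derma}$ in degrees $\geq 2$. Writing $\lf E_k\rf$ for the generating bundles of $\lmin$ and $\lf F_k\rf$ for those of $\tanga{\derma}$, the complement of $\lmin$ is forced in every degree except degree $1$ (where $E_k\cong F_k$ for $k\geq 2$), so the entire problem concentrates in degree $1$.

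The key structural step is to pin down a complement $E_1'$ of $E_1$ inside $F_1$ that is \emph{canonical}, i.e.\@ independent of the local representative. Here I would use two inputs. The Lagrangian condition forces the Euler characteristic of $\lmin$ to be half that of $\tanga{\derma}$, which fixes the rank of $E_1$. Negative definiteness of $\refo{\symstra}$ on $H^1\lp\lmin,\homdi\rp$ then makes $\refo{\symstra}$ non-degenerate on the bundle $Z^1\subseteq F_1$ of $\homdi$-cocycles. With this non-degeneracy, the formula $E_1':=\lp E_1\cap Z^1\rp^{\perp_{\refo{\symstra}}}\cap Z^1$ defines a subbundle determined by the foliation and the symplectic data alone, not by the chosen representative.

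With $E_1'$ canonical, the comparison on overlaps becomes routine. On $\derma\cap\derna$ two representatives $\lmin$, $\llin$ each surject onto the other by discarding the projection to $E_1'$, producing two morphisms $\llin\rightrightarrows\tanga{\derma\cap\derna}$ that are homotopically equivalent. I would then interpolate between them using coefficients from a partition of unity subordinate to the two-chart cover, obtaining a single subcomplex on $\derma\cup\derna$ that still represents the foliation. Because $\demquot$ is second countable it admits a locally finite countable atlas, so this two-chart interpolation can be iterated to yield the global subcomplex $\lmin\subseteq\tanga{\demquot}$ with the asserted generation properties.

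The main obstacle I anticipate is verifying that the partition-of-unity interpolation genuinely lands back in the space of subcomplexes representing the \emph{same} foliation --- that the interpolated graded submodule is closed under $\homdi$ and remains homotopy equivalent to the local $\lmin$ --- and that canonicity of $E_1'$ is robust enough for the iterated gluing to be coherent rather than merely consistent pairwise. Both points rest on the non-degeneracy supplied by negative definiteness of $\refo{\symstra}$, so the delicate part is to track that this non-degeneracy, and hence the well-definedness of the orthogonal complement, persists on all the relevant intersections.
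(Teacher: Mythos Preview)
Your proposal is correct and follows essentially the same route as the paper: local subcomplex representatives via \cite{BSY} Prop.\@ 8, the Euler-characteristic argument fixing ${\rm rk}\,E_1$, the canonical complement $E_1'=\lp E_1\cap Z^1\rp^{\perp_{\refo{\symstra}}}\cap Z^1$, and partition-of-unity interpolation over a countable atlas. The only addition you make is spelling out why $E_1'$ is a subbundle (via non-degeneracy coming from negative definiteness), which the paper asserts without comment.
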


Dividing by this $\lmin$ (in a naive way) we obtain a $\smoup$-linearized bundle on $\demquot[0]$ and a $\smoup$-invariant section. There is also a $\smoup$-invariant section of the dual bundle, whose derived critical locus gives us back the entire stack $\quack{\demquot}{\smoup}$. Details of this construction will be presented elsewhere.

\noindent{\small{\tt{dennis.borisov@uwindsor.ca, lkatzarkov@gmail.com, artan.sheshmani@gmail.com, yau@math.harvard.edu}}

\end{document}

